\newcommand\DS{\mathcal{D}}
\newcommand\lot{\operatorname{l.o.t.}}
\newcommand\tot{\operatorname{tot}}
\newcommand\In{\operatorname{in}}
\newcommand\Sc{\operatorname{sc}}
\newcommand\cT{\mathcal T}
\newcommand\cM{\mathcal M}
\newcommand\cN{\mathcal N}
\newcommand\cH{\mathcal H}
\newcommand\cU{\mathcal U}
\newcommand\cQ{\mathcal Q}
\newcommand\balpha{\boldsymbol \alpha}
\newcommand\btheta{\boldsymbol \theta}
\newcommand\bEta{\boldsymbol \eta}
\newcommand\tbEta{\widetilde{\boldsymbol \eta}}
\newcommand\tepsilon{\widetilde{\epsilon}}
\newcommand\tmu{\widetilde{\mu}}
\newcommand\bBeta{\boldsymbol \beta}
\newcommand\bj{\boldsymbol j}
\newcommand\bk{\boldsymbol k}
\newcommand\bm{\boldsymbol m}
\newcommand\bn{\boldsymbol n}
\newcommand\bx{\boldsymbol x}
\newcommand\by{\boldsymbol y}
\newcommand\hbx{\hat{\boldsymbol x}}
\newcommand\bH{\boldsymbol H}
\newcommand\bE{\boldsymbol E}
\newcommand\bN{\boldsymbol N}
\newcommand\cF{\mathcal F}
\newcommand\bxi{\boldsymbol \xi}
\newcommand\tbxi{\widetilde{\bxi}}
\newcommand\bXi{\boldsymbol \Xi}
\newcommand\cC{\mathcal{C}}
\newcommand\cD{\mathcal{D}}
\renewcommand\Re{\operatorname{Re}}
\renewcommand\Im{\operatorname{Im}}
\newcommand\bbR{\mathbb R}
\newcommand\pa{\partial}
\newcommand\restrictedto{\upharpoonright}
\newcommand\CI{{\mathcal C}^{\infty}}
\newcommand\Id{\operatorname{Id}}
\DeclareMathOperator{\dR}{dR}
\newtheorem{theorem}{Theorem}
\theoremstyle{definition}
\newtheorem{definition}{Definition}
\theoremstyle{remark}
\newtheorem{remark}{Remark}
\begin{document}

\title{Debye Sources and the Numerical Solution\\
of the Time Harmonic Maxwell Equations, II} 

\author{Charles L. Epstein,\footnote{
    Depts. of Mathematics and Radiology, University of Pennsylvania,
    209 South 33rd Street, Philadelphia, PA 19104. E-mail:
    {cle@math.upenn.edu}.
    Research partially supported by NSF grants
    DMS06-03973, DMS09-35165 and DARPA grant HR0011-09-1-0055.} \,\,
 Leslie Greengard,\footnote{Courant Institute,
    New York University, 251 Mercer Street, New York, NY 10012.
    E-mail: {greengard@cims.nyu.edu}. 
    Research partially supported by the U.S. Department of Energy under
    contract DEFG0288ER25053 and by the Air Force Office of Scientific Research
    under MURI grant FA9550-06-1-0337 and NSSEFF Program Award 
    FA9550-10-1-0180.} \\
and Michael O'Neil\footnote{Courant Institute,
    New York University, 251 Mercer Street, New York, NY 10012.
    E-mail: {oneil@cims.nyu.edu}. 
    Research partially supported by the Air Force Office of Scientific Research
    under NSSEFF Program Award FA9550-10-1-0180.
    \newline {\bf Keywords}:
    Maxwell's equations, integral equations of the second kind, dielectric
    problem, electromagnetic scattering,
    uniqueness, perfect conductor, low frequency breakdown, 
    spurious resonances, Debye sources, 
    $k$-harmonic fields.}}  \date{May 16, 2011}

\maketitle

\begin{abstract} In this paper, we develop a new integral representation for the
solution of the  time harmonic Maxwell equations in media with piecewise constant 
dielectric permittivity and magnetic permeability in $\bbR^3.$ 
This representation leads to a coupled system of Fredholm integral equations of the
 second kind for four scalar densities supported on the material interface. 
 Like the classical M\"{u}ller equation, it has no spurious resonances.
 Unlike the classical approach, however, the representation does not  suffer from
 low frequency breakdown.  We illustrate the performance of the method with numerical examples.
\end{abstract}

\section{Introduction}
\label{sec:uniq}
In our previous paper~\cite{EpGr2}, we introduced a new representation
for the time harmonic Maxwell equations in $\bbR^3$, based on
two scalar densities defined on the surface $bD$ of a smooth bounded region 
$D$. This bounded region may have several components, but we assume that 
its complement $D^c$ is connected. 
We refer to these densities as generalized Debye sources, since they generalize
to arbitrary geometries the classical formalism of Lorenz, Debye and Mie 
that is limited to the sphere.
We also showed in~\cite{EpGr2} that the problem of scattering 
from a perfect conductor can be reduced to the solution of a coupled
pair of Fredholm boundary integral equations of the second kind.
This system of equations was shown to be
invertible for all non-zero wave numbers in the closed upper half plane. 
Moreover, in the case that all components of $bD$ are simply connected, 
we showed that this system of equations does not suffer from 
a phenomenon called ``low frequency breakdown". 

Here, we develop an integral equation for the case of 
dielectric (interface) boundary conditions and extend the analysis of 
low frequency breakdown to the multiply connected case.  We  use,
almost exclusively, the representation of fields in the
language of forms, see~\cite{EpGr2}. We let 
\begin{equation}
\bXi(x,t)=\bxi(x)e^{-i\omega t}\quad
\bN(x,t)=\bEta(x)e^{-i\omega t},
\end{equation}
where $\bXi$ is a 1-form representing the electric field and $\bN$ is 
a 2-form representing the magnetic field, that is $\bE\leftrightarrow\bxi$ and
$\bH\leftrightarrow\bEta.$ Faraday's law and Ampere's law (the curl equations)
take the form:
\begin{equation}
  \label{eq:ME1}
  d\bxi=i\omega\mu\bEta\quad d^*\bEta=-i\omega\epsilon\bxi.
\end{equation}
For $\omega\neq 0,$ these equations imply the divergence equations, 
which take the form:
\begin{equation}
  \label{eq:ME2}
  d^*\bxi=0\quad d\bEta=0.
\end{equation}
Together, \eqref{eq:ME1} and~\eqref{eq:ME2} give an elliptic system for the pair
$(\bxi,\bEta).$ In the dielectric/interface case, our representation for
the \THME[$k$] applies {\em within} the bounded components of 
$D$ as well as in the exterior domain. The permittivity $\epsilon$ 
and permeability $\mu$ inside each component may be distinct from the
corresponding values in the exterior. 

More precisely, following the discussion in~\cite{muller},
we assume that in each component of
$\bbR^3\setminus bD,$ the EM-parameters are piecewise constant with
\begin{equation}\label{eqn4.0}
\epsilon=\tepsilon+i\frac{\sigma}{\omega}\quad 
\mu=\tmu+i\frac{\sigma'}{\omega}.
\end{equation}
Here, $\sigma$ and $\sigma'$ are non-negative numbers, 
while $\tepsilon$ and $\tmu$
are positive numbers. The complex numbers $\omega\epsilon$ and $\omega\mu$ lie
in the closed upper half plane. We, therefore, assume that the arguments
of their square roots lie in the interval $[0,\frac{\pi}{2}].$ Hence if
\begin{equation}
  k=\omega\sqrt{\epsilon\mu}=\sqrt{\omega\epsilon}\sqrt{\omega\mu},
\end{equation}
then we can assume that
\begin{equation}
  0\leq \arg k<\pi\quad\text{ provided }\quad 0\leq \arg \omega<\pi.
\end{equation}
We restrict our attention here to the standard case where 
$\omega \in \bbR^+$.
We call the system of equations~\eqref{eq:ME1} and~\eqref{eq:ME2} the
\THME[$k$].

The EM-parameters of $\Omega=\bbR^3\setminus \overline{D},$ are denoted
$(\epsilon_1,\mu_1),$ with $k_1=\omega\sqrt{\epsilon_1\mu_1}.$ If the
components of $D$ are $\{D_1,\dots,D_N\},$ then the EM-parameters for $D_j$ are
$(\epsilon_{0j},\mu_{0j}),$ with $k_{0j}=\omega\sqrt{\epsilon_{0j}\mu_{0j}}.$
The total EM-field in $\Omega$ is written in the form
\begin{equation*}
  (\bxi^{\tot},\bEta^{\tot})=(\bxi^{\Sc},\bEta^{\Sc})-(\bxi^{\In},\bEta^{\In}).
\end{equation*}
Here $(\bxi^{\In},\bEta^{\In})$ is an arbitrary solution to the \THME[$k_1$]
defined in $\Omega,$ and $(\bxi^{\Sc},\bEta^{\Sc})$ is an outgoing solution to
\THME[$k_1$], selected to insure that $(\bxi^{\tot},\bEta^{\tot})$ has the
correct behavior on $bD.$ The ${}^{\Sc}$ superscript is usually omitted in the
sequel. In the perfect conductor case, the fields in the bounded components of
$D$ are zero, while in the dielectric case the total field is each bounded
component is just a scattered field which satisfies the appropriate variant of
Maxwell's equations.

As noted, the scattered field, $(\bxi^{\Sc},\bEta^{\Sc}),$ is assumed to satisfy the
outgoing radiation condition in $\Omega.$ For the
electric field, this reads:
\begin{equation}
  \label{eq:radcond}
  i_{\hbx}d\bxi-ik\bxi=O\left(\frac{1}{|x|^2}\right),\quad
\bxi=O\left(\frac{1}{|x|}\right),
\end{equation}
where $\hbx=\frac{\bx}{\|\bx\|}.$ The same condition is satisfied by
$\star_3\bEta,$ where $\star_3$ is the Hodge star operator acting on forms
defined in $\bbR^3.$ It is a classical result that if $(\bxi,\bEta)$ solves the
\THME[$k$] for $k\neq 0$, with non-negative imaginary part, and one component
is outgoing, then so is the other. When $\omega=0$ (or $k=0$) the equations for
$\bxi$ and $\bEta$ decouple; the divergence equations,~\eqref{eq:ME2}, are no
longer a consequence of the curl equations, but are nonetheless assumed to
hold.

In this paper, we continue our study of the representation of solutions to the
time harmonic Maxwell equations in terms of the scalar Debye source densities,
which we denote by $(r,q)$, supplemented, in the case that $bD$ is of genus
$g>0,$ by the $2g$-dimensional space of harmonic 1-forms $\bj_H.$ We first show
how to use our representation to solve the time harmonic Maxwell equation when
$D$ is a dielectric with piecewise constant $\mu, \epsilon$ and $\sigma,$ under
the physical boundary condition that the tangential components of the $\bxi$
and $\bEta$ fields are continuous across $bD.$ As before we obtain a system of
Fredholm equations of second kind, which does not suffer from either spurious,
interior resonances or low frequency breakdown. The key to the good behavior as
$k\to 0,$ is that the scalar Debye sources $(r,q)$ are most directly related to
the \emph{normal} components, along $bD,$ of the $\bxi$ and $\bEta$
fields. Unlike the tangential components, which, at $k=0,$ must satisfy a
differential equation along $bD,$ ($d_{bD}\balpha=0$) the normal components are
not in any way constrained when $k=0.$ Thus, as a means of parameterizing
solutions of the \THME[$k$], the normal components behave much better as
$k\to0$ than does the tangential data.

In the second part of the paper we give a detailed analysis of the low
frequency behavior of our representation in the perfect conductor case. In the
case that the genus of $bD$ is non-zero, the dielectric problem is somewhat
simpler than the perfect conductor. At $k=0$ the solutions of the time harmonic
Maxwell equations are harmonic fields:
\begin{equation}
  d^*\bxi=d\bxi=0\text{  and  }d^*\bEta=d\bEta=0.
\end{equation}
This means that these fields represent cohomology classes in
$H^1_{\dR}(\Omega)$ and $H^2_{\dR}(\Omega),$ respectively. Therefore, in the
case of perfect conductors, the restrictions of $\bxi^+,$ and $\star_3\bEta^+$
to $b\Omega=bD$ each span subspaces of $H^1_{\dR}(bD)$ of half the total
dimension. In the dielectric case, we need to consider the jumps of these
fields across $bD.$ For all wave numbers, including $k=0,$ the harmonic
projections of these jumps span all of $H^1_{\dR}(bD).$ Hence the topological
constraint that arises in the perfect conductor problem, complicating the low
frequency behavior, is absent in the dielectric case.

In Section~\ref{s.potentials} we review the generalities of the representation
of solutions in terms of Debye sources, including the restrictions to the
boundaries and the jump conditions. In Section~\ref{s.unique} we show how to
use this approach to represent solutions to the dielectric problem and prove
the basic uniqueness results. In Section~\ref{frdeqns} we derive the boundary
integral equations for the dielectric problem and show that they do not suffer
from low frequency breakdown in either the simply connected, or non-simply
connected cases.  Finally, in Section~\ref{lfbpc}, we present a new approach to
the perfect conductor problem, when $bD$ is not simply connected. Using this
approach there is no low frequency breakdown as $k\to 0,$ and the perfect
conductor problem for the \THME[$k$]
gracefully decouples. Section~\ref{sec6} contains numerical experiments
illustrating several of the results proved in earlier sections.

\section{Debye Sources and Potentials}\label{s.potentials}

We begin by reviewing the symmetric representation of solutions to the \THME[$k$]
in terms of both potentials and anti-potentials.
We assume, as discussed above, that the time dependence is $e^{-i\omega t},$
and that
the permittivity $\epsilon,$ the permeability  $\mu,$ and the conductivity
$\sigma$ are piecewise constant. We let
$k=\omega\sqrt{\epsilon\mu},$ which we take to have non-negative imaginary
part. As in~\cite{EpGr2},
we represent the solution to the \THME[$k$] by setting:
\begin{equation}
\bxi=\sqrt{\mu}(ik\balpha-d\phi-d^*\btheta)\quad
\bEta=\sqrt{\epsilon}(ik\btheta-d^*\Psi+d\balpha),
\label{eqn29}
\end{equation}
where $\phi$ is a scalar function, $\balpha$ a one form, $\btheta$ a two form,
and $\Psi=\psi dV,$ a three form;
$\balpha$ is the usual vector potential and $\phi$ the corresponding scalar potential, while 
$\btheta$ is the vector anti-potential and $\psi$ the corresponding scalar anti-potential.
There are many possible choices for scaling
the coefficients of the various terms on the right hand side of~\eqref{eqn29}.
An advantage of the scaling in~\eqref{eqn29} is that, in each sub-region, 
$\mu$ and $\epsilon$ only appear as global multipliers, 
while the terms within the parentheses depend only on the wave
number $k=\omega\sqrt{\mu\epsilon}.$ 

Assuming that all of the potentials solve the Helmholtz equation,
$\Delta\bBeta+k^2\bBeta=0,$ where $\Delta=-(d^*d+dd^*)$ denotes the (negative)
Laplace operator in the correct form degree, for $(\bxi, \bEta)$ to satisfy the
equations in $\Gamma^c:$
\begin{equation}
  \label{eq:1.29.8.1}
  d\bxi=i\omega\mu\bEta\quad d^*\bEta=-i\omega\epsilon\bxi,
\end{equation}
it suffices to check that (in the Lorenz gauge)
\begin{equation}
d^*\balpha=-ik \phi\quad d\btheta=ik\Psi.
\label{pteq3}
\end{equation}

\subsection{Debye source representation}
We let $g_k(x,y)$ denote the fundamental solution for the scalar Helmholtz
equation, with wave number $k$, which satisfies the Sommerfeld radiation 
condition:
\[
g_k(x,y) = \frac{e^{i k |x-y|}}{4 \pi |x-y|}. \] 
For the moment we assume that
$D$ is connected, and let $\Gamma=bD.$ All of the potentials can be expressed in
terms of a pair of 1-forms $\bj, \bm$ defined on $\Gamma,$ which define
electrical and magnetic currents. As $\Gamma$ is embedded in
$\bbR^3$ these 1-forms can be expressed in terms of the ambient basis from
$\bbR^3,$ e.g.,
\begin{equation}
\bj=j_1(x)dx_1+j_2(x)dx_2+j_3(x)dx_3;
\end{equation}
we normalize with the requirement 
\begin{equation}
i_{\bn}\bj=\bj(\bn)\equiv 0.
\label{1frmnrm}
\end{equation}

It is well known \cite{Jackson,Papas} that the conditions \eqref{pteq3} are satisfied if
\begin{equation}
\begin{split}
\balpha=\int\limits_{\Gamma}g_k(x,y)[\bj(y)\cdot d\bx]dA(y)
&\quad\phi=\frac{1}{ik}\int\limits_{\Gamma} g_k(x,y) d_{\Gamma}
\star_2\bj(y) \\
\btheta=\star_3\int\limits_{\Gamma}g_k(x,y)
[\bm(y)\cdot d\bx]dA(y)&\quad 
\Psi=\frac{dV_x}{ik}\int\limits_{\Gamma} g_k(x,y) d_{\Gamma}\star_2\bm(y).
\end{split}
\label{srfint2}
\end{equation}

In the end, however, we do \emph{not} use the currents
$\bj$ and $\bm$ as the ``fundamental'' parameters. In~\cite{EpGr2}, we introduced the
notion of generalized Debye sources, $r,q$, which are scalar functions defined on $\Gamma:$
\begin{equation}
\frac{1}{ik}d_{\Gamma}\star_2\bj=rdA\quad
\frac{1}{ik}d_{\Gamma}\star_2\bm=qdA.
\label{eqn53}
\end{equation}
 From this definition, we see that $rdA$ and $qdA$ are exact
and hence their mean values vanish on $\Gamma,$
\begin{equation}
\int\limits_{\Gamma}rdA=\int\limits_{\Gamma}qdA=0
\end{equation}
This is {\bf necessary} for the conditions in~\eqref{eqn53} to hold, and thus, for
$(\bxi,\bEta)$ to satisfy the Maxwell equations. In terms of the generalized Debye sources:
\begin{equation}
\phi=\int\limits_{\Gamma} g_k(x,y)r(y)dA(y)
\quad
\Psi=dV_x\int\limits_{\Gamma} g_k(x,y) q(y)dA(y)
.\label{srfint3}
\end{equation}
Below, we derive equations for $\bj$ and $\bm$ in terms of these scalar
potentials and, if needed, harmonic 1-forms. 

Provisionally, we let $\cF^{\pm}(\epsilon,\mu,\omega,\Gamma,\bj,\bm)$ denote
the fields defined by~\eqref{eqn29} and~\eqref{srfint2}. Here $+$ refers to
the unbounded component of $\Gamma^c$ and $-,$ the bounded component. In the
sequel, $\bj$ and $\bm$ are usually taken to be functions of scalar sources,
e.g. $r$ and $q,$ via the relations in~\eqref{eqn53}, along with a possible
harmonic components $(\bj_H,\bm_H).$ The functional relationships between the
currents $(\bj,\bm)$ and the Debye sources $(r,q,\bj_H,\bm_H)$ depend on the
relationships between the currents themselves, which in turn depend on the
particulars of the boundary value problem we are trying to solve. For example,
in the perfect conductor problem, we take $\bm=\star_2\bj,$ and then
\begin{equation}
  \bj=ik[d_{\Gamma} R_0 r-\star_2d_{\Gamma}R_0q]+\bj_H,
  \label{jdefpc}
\end{equation}
where $R_0$ is the inverse of the (negative) scalar surface Laplacian,
\begin{equation}
  \Delta_{\Gamma,0}u=(\star_2d_{\Gamma}\star_2d_{\Gamma}+d_{\Gamma}\star_2d_{\Gamma}\star_2)u,
\end{equation}
 restricted to
functions of mean zero.
Note that, in the limit $\omega \rightarrow 0$, only the harmonic components of
the surface currents play a role.

\subsection{Mapping properties of the Debye source representation}
The regularity of the fields $(\bxi^{\pm},\bEta^{\pm})$ is straightforward to
describe in terms of the regularity of the Debye source data:
$(r,q,\bj_H,\bm_H).$ The harmonic components $(\bj_H,\bm_H)$ are always
infinitely differentiable. If $r$ and $q$ belong to the $L^2$-Sobolev space,
$H^s(\Gamma),$ then the currents $\bj$ and $\bm,$ defined by~\eqref{jdefpc}
belong to $H^{s+1}(\Gamma).$ As we shall see, this remains true for the
dielectric problem, even though the relationships amongst the currents and the
scalar sources are somewhat different.

For any $k$ in the closed upper half plane, and
real number $s,$ the single layer potential defines bounded maps
\begin{equation}
\begin{split}
  &S_k^{+}: H^{s}(\Gamma)\longrightarrow H^{s+\frac 32}(\Omega)\text{ and }\\
 &S_k^{-}: H^{s}(\Gamma)\longrightarrow H^{s+\frac 32}(D)
\end{split}
\end{equation}
From these observations we conclude that, if $(r,q)\in H^s(\Gamma),$ then
\begin{equation}
\begin{split}
 &\balpha^+,\btheta^+\in H^{s+\frac 52}(\Omega),\quad, \phi^{+},\Psi^+\in
  H^{s+\frac 32}(\Omega),\text{ and }\\
&\balpha^-,\btheta^-\in H^{s+\frac 52}(D),\quad, \phi^{-},\Psi^-\in
  H^{s+\frac 32}(D).
\end{split}
\end{equation}
Taken together, these observations along with~\eqref{eqn29} show that if
$(r,q)\in H^s(\Gamma),$ then
\begin{equation}
  (\bxi^{+},\bEta^{+})\in H^{s+\frac 12}(\Omega)\text{ and }
(\bxi^{-},\bEta^{-})\in H^{s+\frac 12}(D).
\end{equation}

In our applications the Debye sources $(r,q)$ are determined by solving
Fredholm equations of second kind, with kernels defined by elliptic
pseudodifferential operators of order $0.$ Hence, if the data belong to
$H^s(\Gamma),$ then so do the Debye sources, and therefore the fields in
$\Gamma^c$ belong to $H^{s+\frac 12}.$ This is precisely what one expects for
the solution of an elliptic boundary value problem for a first order elliptic
system.

\subsection{Boundary equations}
Following the convention in~\cite{EpGr2}, we use $\star_2\bxi^{\pm}_t$ and
$\star_2([\star_3\bEta]_t),$ which correspond to $\bn\times\bE$ and
$\bn\times\bH,$ respectively, to represent the tangential components, and the
scalar functions $i_{\bn}\bxi^{\pm}$ and $i_{\bn}[\star_3\bEta]$ to represent
the normal components, corresponding to $\bn\cdot\bE,$ and $\bn\cdot\bH.$ These
limiting values are given by the integral operators:
\begin{align} 
& \left(\begin{matrix}\star_2\bxi^{\pm}_t/\sqrt{\mu}\\
\star_2([\star_3\bEta^{\pm}]_t/\sqrt{\epsilon}\end{matrix}\right) =  \nonumber \\
& \frac{1}{2}\left(\begin{matrix}\pm\bm\\\mp\bj\end{matrix}\right)
+\left(\begin{matrix}-K_1& 0 & ikK_{2,t}& -K_4\\
0 & -K_1 & K_4 & ik K_{2,t}\end{matrix}\right)
\left(\begin{matrix}r\\q\\\bj\\\bm\end{matrix}\right)
\overset{d}=
  \left(\begin{matrix}\cT^{\pm}_{\bxi}\\
\cT^{\pm}_{\bEta}\end{matrix}\right)
\label{tngtds0}
\end{align}
and
\begin{align}
&  \left(\begin{matrix}i_{\bn}\bxi^{\pm}/\sqrt{\mu}\\
i_{\bn}[\star_3\bEta^{\pm}]/\sqrt{\epsilon}\end{matrix}\right)= \nonumber \\
& \frac{1}{2}\left(\begin{matrix}\pm r\\\pm q\end{matrix}\right)
+\left(\begin{matrix}-K_0& 0 & ikK_{2,n}& -K_3\\
 0 & -K_0 & K_3 & ik K_{2,n}\end{matrix}\right)
\left(\begin{matrix}r\\q\\\bj\\\bm\end{matrix}\right)
\overset{d}= \left(\begin{matrix}\cN^{\pm}_{\bxi}\\
\cN^{\pm}_{\bEta}\end{matrix}\right).
\label{nrmds0}
\end{align}
The operators $K_0, K_1, K_{2,t}, K_{2,n}, K_3$ and $K_4$ are defined in the appendix.

In what follows, we allow the bounded domain $D,$ and therefore its boundary,
$\Gamma$ to have several connected components, $\{\Gamma_1,\dots,\Gamma_N\}.$
 In all cases, we let
$\cM_{\Gamma,0}$ denote pairs of functions $(r,q)$ defined on $\Gamma$ so that
\begin{equation}
  \int\limits_{\Gamma_l}rdA=  \int\limits_{\Gamma_l}qdA=0\quad\text{ for }l=1,\dots,N.
\end{equation}
Such functions are referred to as \emph{scalar Debye sources}. We let
$\cH^1(\Gamma)$ denote the vector space of harmonic 1-forms on $\Gamma,$ that is,
the solutions of
\begin{equation}
  d_{\Gamma}\balpha=d^*_{\Gamma}\balpha=0.
\end{equation}
Let $p_l$ be the genus of $\Gamma_l,$ and
\begin{equation}
  p=p_1+\dots+p_N,
\end{equation}
the total genus of $\Gamma.$ It is a classical theorem that
\begin{equation}
  \dim\cH^1(\Gamma_l)=2p_l.
\end{equation}
Thus $\dim\cH^1(\Gamma)=2p.$

\section{Uniqueness for the Dielectric Problem}\label{s.unique}
We now apply our representation to the problem of several dielectric materials
separated by smooth bounded interfaces. We begin with the slightly simpler case
of a single connected, bounded region. The bounded region is denoted
by $D;$ we assume that $bD=\Gamma$ is connected and $\Omega$ is the exterior region
$\bbR^3\setminus \overline{D}.$ We let $(\epsilon_0,\mu_0)$ denote the EM-parameters for
$D$ and $(\epsilon_1,\mu_1)$ denote the EM-parameters for $\Omega.$ Given a
frequency $\omega \neq 0$ from the closed upper half plane, we set
\begin{equation}
  k_l=\omega\sqrt{\mu_l\epsilon_l}\text{ for }l=0,1,
\end{equation}
where $0\leq\arg k_l<\pi.$ We denote by 
$(\bxi^{+,1},\bEta^{+,1})$ the electromagnetic field in the exterior region $\Omega$ corresponding
to the exterior parameters and by
$(\bxi^{-,0},\bEta^{-,0})$ the electromagnetic field in the interior region $D$ corresponding to the 
interior parameters.

The dielectric problem for $(\bxi,\bEta)$ involves the determination of these fields satisfying
\begin{equation}\label{diebc}
  \bxi^{+,1}_t-\bxi^{-,0}_t=\bj^{\In}_t\text{ and }
(i_{\bn}\bEta^{+,1})_t-(i_{\bn}\bEta^{-,0})_t=-\star_2\bm^{\In}_t,
\end{equation}
where $(\bj^{\In},\bm^{\In})$ are two $1$-forms specified along $\Gamma$. Where
we recall that $\bn$ is the outward normal,
relative to $D$ along $\Gamma,$ and that:
\begin{equation}
  \star_2(i_{\bn}\bEta)=(\star_3\bEta)_t.
\end{equation}
Note that, in the case of a scattering problem, $(\bj^{\In},\bm^{\In})$ are the
tangential components of the known incoming field.

For $l=0,1,$ we suppose that these solutions are defined by currents
$(\bj_{l},\bm_{l})$ defined on $\Gamma.$  For
$l=0,1$  we let 
\begin{equation}\label{eqn20}
(\bxi^{\pm,l},\bEta^{\pm,l})=\cF^{\pm}(\epsilon_l,\mu_l,\omega,\Gamma,\bj_l,\bm_l)
\end{equation}
denote the solutions to the \THME[$k_l$] in $\bbR^3\setminus\Gamma$ specified
by these sources. 
Solutions defined by this ansatz automatically satisfy the outgoing radiation condition.

We seek a  solution of the dielectric problem  given by 
\begin{equation}\label{diepr1comp}
(\bxi,\bEta)=\begin{cases}
\cF^{-}(\epsilon_0,\mu_0,\omega,\Gamma,\bj_0,\bm_0)&\text{ in }D\\
\cF^{+}(\epsilon_1,\mu_1,\omega,\Gamma,\bj_1,\bm_1)&\text{ in }\Omega,
\end{cases}
\end{equation}
according to~(\ref{eqn29}),~(\ref{srfint2}).
Note, however, that we have four unknown $1$-forms on $\Gamma$ but only
two $1$-forms as data. 
We therefore, suppose
that the inner and outer currents are related by a
transformation of the form
\begin{equation}\label{crtsrc}
   \bj_{0}=\sqrt{\frac{\epsilon_1}{\epsilon_0}}
\cU\bj_{1}\text{ and }\bm_0=\sqrt{\frac{\mu_1}{\mu_0}}\cU\bm_{1},
\end{equation}
where $\cU$ is what we  refer to as a \emph{clutching map}.

\begin{definition}\label{clmp}
A clutching map is a linear isomorphism
$$\cU:\cC^0(bD;\Lambda^1)\to\cC^0(bD;\Lambda^1),$$ 
that satisfies the conditions:
\begin{enumerate}
\item The map is complex symplectic: for $\balpha, \bBeta\in
  \cC^0(bD;\Lambda^1)$ we have
  \begin{equation}\label{symplmap0}
    \oint\limits_{bD}\overline{\balpha}\wedge\bBeta=
 \oint\limits_{bD}\overline{\cU\balpha}\wedge \cU\bBeta.
  \end{equation}
\item The map $\cU$ preserves the harmonic forms $\cU\cH^1(bD)=\cH^1(bD).$
\item On the orthogonal complement of the harmonic 1-forms,
  $$\cH^1(bD)^{\bot}=d_{bD}\cC^1(bD)\oplus d_{bD}^*\cC^1(bD;\Lambda^2),$$ 
  $\cU$ reduces to the Hodge star-operator:
  \begin{equation}
\cU\restrictedto_{\cH^1(bD)^{\bot}}=\star_2.
  \end{equation}
\end{enumerate}
\end{definition}
\noindent
The simplest example is to set:
\[
 \cU \bj=\star_2\bj,
\]
For this choice,
however, a mild form of low frequency breakdown occurs in certain
non-generic cases. 

The harmonic 1-forms are invariant under the action of $\star_2.$ It is also
the case that if $\balpha\in\cH^1(bD)$ and $\bBeta=d_{bD}
u+d_{bD}^*vdA,$ then
\begin{equation}
  \int\limits_{bD}\overline{\balpha}\wedge\bBeta=
\int\limits_{bD}\overline{\balpha}\wedge\star_2\bBeta=0.
\end{equation}
From these observations and condition 3, it is apparent that $\cU$ is nothing
more than a choice of hermitian symplectic isomorphism from $\cH^1(bD)$ to
itself. If $D$ has multiple components, $\{D_1,\dots,D_N\},$ then this is done
one component at a time, i.e. we choose maps $\cU_l:\cH^1(bD_l)\to\cH^1(bD_l),$
for $l=1,\dots,N.$

With scalar Debye sources $(r_l,q_l)$ and the requirement
\begin{equation}
  d_{\Gamma}\star_2\bj_{l}=ik_lr_ldA\text{ and }d_{\Gamma}\bm_{l}=ik_lq_ldA,
\end{equation}
conditions 2 and 3 on the map $\cU$ imply that:
\begin{equation}
  \Delta_1\bj_1=i\omega(\sqrt{\mu_1\epsilon_1}d_{\Gamma}r_1-
\epsilon_0\sqrt{\frac{\mu_0}{\epsilon_1}}
\star_2d_{\Gamma}r_0)
\end{equation}
\begin{equation}
\Delta_1\bm_1=i\omega(\sqrt{\mu_1\epsilon_1}d_{\Gamma}q_1-
\mu_0\sqrt{\frac{\epsilon_0}{\mu_1}}\star_2d_{\Gamma}q_0).
\end{equation}
This means that we can define $\bj_1$ and $\bm_1$ much as in the perfect
conductor case:
\begin{equation}\label{j1r1r0}
  \bj_1=i\omega(\sqrt{\mu_1\epsilon_1}d_{\Gamma}R_0r_1-\epsilon_0\sqrt{\frac{\mu_0}{\epsilon_1}}
\star_2d_{\Gamma}R_0r_0)+\bj_H,
\end{equation}
and
\begin{equation}\label{m1q1q0}
\bm_1=i\omega(\sqrt{\mu_1\epsilon_1}d_{\Gamma}R_0q_1-\mu_0\sqrt{\frac{\epsilon_0}{\mu_1}}
\star_2d_{\Gamma}R_0q_0)+\bm_H.
\end{equation}
Here $R_0$ is the partial inverse of the (negative) scalar Laplace operator on $\Gamma$
and $\bj_H,\bm_H$ are the harmonic projections of $\bj_1,\bm_1,$ respectively.
Clearly, $\bj_1$ and $\bm_1$ are
of order $-1$ in terms of the scalar sources. If $\omega=0$, then $\bj_1$ and $\bm_1$
are purely harmonic. 

In Theorem \ref{thm01} below, we  also make use of the jump relations
\begin{equation}
  \bxi^{+,l}_t-\bxi^{-,l}_t=-\sqrt{\mu_l}\star_2\bm_l\text{ and }
(i_{\bn}\bEta^{+,l})_t-(i_{\bn}\bEta^{-,l})_t=\sqrt{\epsilon_l}\bj_l,
\label{jumpconds}
\end{equation}
which follow for $l=0,1$ from equation~\eqref{tngtds0}.
Following M\"uller's argument, we now show that our parametrization satisfies a
basic uniqueness requirement for all non-zero frequencies in the closed upper half plane.

\begin{theorem}\label{thm01} Assume that $\omega\neq 0$, that 
$\omega\mu_0,\omega\epsilon_0,\omega\mu_1,\omega\epsilon_1$
have non-negative imaginary parts, and that
  $\Re(\mu_0/\epsilon_0)>0.$ 
 Let $bD$ be connected and
  $$\cD=(r_0,q_0,r_1,q_1,\bj_H,\bm_H)$$ 
be Debye source data defining solutions to
  \THME[$k_l$] in $bD^c,$ with
  \begin{equation}\label{bcrtrl}
    \bj_0=\sqrt{\frac{\epsilon_1}{\epsilon_0}}\cU\bj_1\text{ and }
\bm_0=\sqrt{\frac{\mu_1}{\mu_0}}\cU\bm_1,
  \end{equation}
for $\cU$ a clutching map.
 If $(\bxi,\bEta)$ given by~\eqref{diepr1comp} satisfies:
 \begin{equation}
   \bxi^+_t-\bxi^-_t=0\text{ and }(i_{\bn}\bEta^+)_t-(i_{\bn}\bEta^-)_t=0,
 \end{equation}
 then the data, $(r_0,q_0,r_1,q_1,\bj_H,\bm_H),$
  are also zero.
\end{theorem}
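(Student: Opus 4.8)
The plan is to run M\"uller's classical uniqueness argument for the dielectric transmission problem in two stages: first show that the field $(\bxi,\bEta)$ produced by the ansatz~\eqref{diepr1comp} vanishes identically in $D\cup\Omega$, and then show that this forces every component of $\cD$ to vanish. Throughout I would write $(\bxi^{\pm,l},\bEta^{\pm,l})$ for the auxiliary fields of~\eqref{eqn20}, so that $(\bxi,\bEta)=(\bxi^{-,0},\bEta^{-,0})$ in $D$ and $(\bxi,\bEta)=(\bxi^{+,1},\bEta^{+,1})$ in $\Omega$, and keep in mind that the jump relations~\eqref{jumpconds} hold for $l=0,1$.

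For the first stage I would apply the Green (energy) identity for Maxwell's equations on a region $U$ with parameters $(\epsilon,\mu)$, which reads (up to the usual conventions for $d^*$ and $\star$)
\[
\oint_{\partial U}\bxi_t\wedge\overline{(\star_3\bEta)_t}=\int_U\bigl(i\omega\mu\,|\bEta|^2-i\omega\overline{\epsilon}\,|\bxi|^2\bigr)\,dV,
\]
to $U=D$ (parameters $\epsilon_0,\mu_0$) and to $U=\Omega_R:=\Omega\cap\{|x|<R\}$ (parameters $\epsilon_1,\mu_1$), and add the two. The hypotheses $\bxi^+_t=\bxi^-_t$ and $(i_{\bn}\bEta^+)_t=(i_{\bn}\bEta^-)_t$, the latter equivalent to $(\star_3\bEta^+)_t=(\star_3\bEta^-)_t$, make the two $\Gamma$-contributions cancel, so $\oint_{|x|=R}\bxi_t\wedge\overline{(\star_3\bEta)_t}$ equals the sum of the two volume integrals. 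Taking real parts and writing $\epsilon_l=\tepsilon_l+i\sigma_l/\omega$, $\mu_l=\tmu_l+i\sigma_l'/\omega$ with $\sigma_l,\sigma_l'\ge 0$ as in~\eqref{eqn4.0}, the volume side becomes $-\int_D(\sigma_0'|\bEta|^2+\sigma_0|\bxi|^2)-\int_{\Omega_R}(\sigma_1'|\bEta|^2+\sigma_1|\bxi|^2)\le 0$, while the radiation condition~\eqref{eq:radcond} forces $\lim_{R\to\infty}$ of the left side to be a positive constant (positive since $\tepsilon_1,\tmu_1>0$) times the squared $L^2$-norm of the electric far-field pattern of $(\bxi^{+,1},\bEta^{+,1})$. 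Hence both sides vanish, so $(\bxi^{+,1},\bEta^{+,1})\equiv 0$ in $\Omega$ (if the far field vanishes, by Rellich's lemma; if instead $\sigma_1$ or $\sigma_1'$ is positive, the corresponding volume term kills $\bxi$ or $\bEta$, hence both, in $\Omega$), and the $D$-volume term also vanishes. If $\sigma_0$ or $\sigma_0'$ is positive the latter gives $(\bxi^{-,0},\bEta^{-,0})\equiv 0$ in $D$ at once; if $\epsilon_0,\mu_0$ are real, then, since the transmission conditions and the vanishing of the exterior field make the tangential Cauchy data of $(\bxi^{-,0},\bEta^{-,0})$ on $\Gamma$ vanish, extending it by zero across $\Gamma$ produces a solution of Maxwell's equations with wave number $k_0$ on all of $\bbR^3$ that vanishes on the open set $\Omega$, hence everywhere, by unique continuation.

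For the second stage I would use that $(\bxi^{-,0},\bEta^{-,0})\equiv 0$ and $(\bxi^{+,1},\bEta^{+,1})\equiv 0$, so these fields have vanishing interior, respectively exterior, Cauchy data on $\Gamma$; by~\eqref{jumpconds} the remaining traces are then pinned down: $\bxi^{+,0}_t=-\sqrt{\mu_0}\,\star_2\bm_0$, $(\star_3\bEta^{+,0})_t=\sqrt{\epsilon_0}\,\star_2\bj_0$, while $\bxi^{-,1}_t=\sqrt{\mu_1}\,\star_2\bm_1$, $(\star_3\bEta^{-,1})_t=-\sqrt{\epsilon_1}\,\star_2\bj_1$. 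Inserting these into the $\Gamma$-boundary term of the Green identity for $(\bxi^{+,0},\bEta^{+,0})$ on $\Omega_R$ and for $(\bxi^{-,1},\bEta^{-,1})$ on $D$ turns each into a nonzero multiple of $\oint_\Gamma\overline{\bj_1}\wedge\bm_1$; the first is a priori a multiple of $\oint_\Gamma\overline{\bj_0}\wedge\bm_0$, but substituting the clutching relation $\bj_0=\sqrt{\epsilon_1/\epsilon_0}\,\cU\bj_1$, $\bm_0=\sqrt{\mu_1/\mu_0}\,\cU\bm_1$ of~\eqref{bcrtrl} and invoking the complex-symplectic identity~\eqref{symplmap0} for $\cU$, the parameter-dependent prefactors conspire so that the two boundary terms in fact coincide. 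Feeding the $D$-identity for $(\bxi^{-,1},\bEta^{-,1})$ into the $\Omega_R$-identity for $(\bxi^{+,0},\bEta^{+,0})$ and taking real parts exactly as before yields an inequality whose only resolution is that the far field of $(\bxi^{+,0},\bEta^{+,0})$ vanishes (or, when $\epsilon_0,\mu_0$ are not real, that the relevant $\sigma_0,\sigma_0'$ volume terms vanish). This is where $\Re(\mu_0/\epsilon_0)>0$ enters: $(\bxi^{+,0},\bEta^{+,0})$ is an exterior radiating field carrying the interior impedance $\sqrt{\mu_0/\epsilon_0}$, and this hypothesis is what makes the constant relating its radiated power to the squared norm of its far field positive, forcing that far field to vanish. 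In every case $(\bxi^{\pm,0},\bEta^{\pm,0})\equiv 0$ on $bD^c$; the jump relations~\eqref{jumpconds} then give $\bm_0=\bj_0=0$, and~\eqref{bcrtrl} with the invertibility of $\cU$ gives $\bm_1=\bj_1=0$.

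Finally I would insert $\bj_1=0$ into~\eqref{j1r1r0} and $\bm_1=0$ into~\eqref{m1q1q0} and use $L^2$-orthogonality of the Hodge decomposition of $1$-forms on $\Gamma$: each summand vanishes separately, so the harmonic parts give $\bj_H=\bm_H=0$, the exact parts give $d_{\Gamma}R_0r_1=d_{\Gamma}R_0q_1=0$, and the co-exact parts give $d_{\Gamma}R_0r_0=d_{\Gamma}R_0q_0=0$; as $R_0r_l$ and $R_0q_l$ have mean zero they must vanish, hence $r_0=q_0=r_1=q_1=0$, i.e. $\cD=0$. I expect the main obstacle to be the bookkeeping in the second stage: computing the two auxiliary $\Gamma$-boundary terms correctly from~\eqref{jumpconds}, recognizing that the complex-symplectic property of the clutching map is precisely what forces the $l=0$ and $l=1$ boundary terms to agree, and tracking all signs so that the combined energy identity is genuinely sign-definite. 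A secondary nuisance is threading all four combinations of lossy and lossless interior and exterior media through both stages, using Rellich's lemma together with unique continuation in the lossless directions and the conductivity terms in the lossy ones.
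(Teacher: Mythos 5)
Your proposal is correct and follows essentially the same two-stage argument as the paper: first kill $(\bxi^{+,1},\bEta^{+,1})$ and $(\bxi^{-,0},\bEta^{-,0})$ by M\"uller's transmission uniqueness (which the paper simply cites as Theorem~61 of \cite{muller} while you re-derive it), then apply Stokes/energy identities to the ``flipped'' fields $(\bxi^{+,0},\bEta^{+,0})$ and $(\bxi^{-,1},\bEta^{-,1})$, using the jump relations, the clutching relation, and the complex-symplectic property~\eqref{symplmap0} to cancel the $\Gamma$-boundary terms, with $\Re(\mu_0/\epsilon_0)>0$ controlling the sphere term. The concluding step --- $\bj_1=\bm_1=0$ and then Hodge orthogonality in~\eqref{j1r1r0},~\eqref{m1q1q0} forcing $\cD=0$ --- is exactly the paper's.
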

\begin{remark} For non-zero frequencies, the representation is unique for any
  choice of clutching map. After proving this
  theorem we consider what happens if $D$ has several components, and finally
  what happens when $\omega=0.$
\end{remark}
\begin{proof} Since $(\bxi^{+,1},\bEta^{+,1})$ is an outgoing solution to
  \THME[$k_1$] it follows from M\"uller's uniqueness theorem (Theorem 61
  in\cite{muller}) that $(\bxi^{+,1},\bEta^{+,1})=(0,0)$ and
  $(\bxi^{-,0},\bEta^{-,0})=(0,0).$ To prove the theorem we need to show that
  \begin{equation}
(\tbxi,\tbEta)=\begin{cases}&(\bxi^{+,0},\bEta^{+,0})\text{ in }\Omega\\
&(\bxi^{-,1},\bEta^{-,1})\text{ in }D.
\end{cases}
  \end{equation}
   also vanishes. The
  jump conditions \eqref{jumpconds}  imply that 
  \begin{equation}
    \bxi^{+,0}_t=-\sqrt{\mu_0}\star_2\bm_0,\quad (i_{\bn}\bEta^{+,0})_t=\sqrt{\epsilon_0}\bj_0,
  \end{equation}
and
 \begin{equation}
    \bxi^{-,1}_t=\sqrt{\mu_1}\star_2\bm_1,\quad (i_{\bn}\bEta^{-,1})_t=-\sqrt{\epsilon_1}\bj_1.
  \end{equation}
The boundary current relation~\eqref{bcrtrl} then implies that
\begin{equation}\label{bcpl0}
   \bxi^{+,0}_t=-\sqrt{\mu_1}\star_2\cU\bm_1,\quad
   (i_{\bn}\bEta^{+,0})_t=\sqrt{\epsilon_1}
\cU\bj_1.
\end{equation}

Stokes theorem shows that:
\begin{equation}
\begin{split}
  \int\limits_{D}d(\overline{\bxi^{-,1}}\wedge\star_3\bEta^{-,1})=&
\oint\limits_{bD}\overline{\bxi^{-,1}}\wedge\star_3\bEta^{-,1}\\
=&\oint\limits_{bD}\overline{\bxi^{-,1}_t}\wedge\star_2[i_{\bn}\bEta^{-,1}]_t\\
=&-\sqrt{\bar{\mu}_1\epsilon_1}\oint\limits_{bD}\star_2\overline{\bm_1}\wedge\star_2\bj_1.
\end{split}
\end{equation}
We use $\oint_{bX}$ to emphasize that this is the pairing of a 2-form with a
2-cycle, where $bX$ is oriented as the boundary of $X.$ If
$\Omega_R=\Omega\cap B_R(0),$ then a second application of Stokes theorem gives
\begin{equation}
\begin{split}
  \int\limits_{\Omega_R}d(\overline{\bxi^{+,0}}\wedge\star_3\bEta^{+,0})=&
\oint\limits_{b\Omega}\overline{\bxi^{+,0}}\wedge\star_3\bEta^{+,0}+
\oint\limits_{bB_R}\overline{\bxi^{+,0}}\wedge\star_3\bEta^{+,0}\\
=&
\oint\limits_{b\Omega}\overline{\bxi^{+,0}_t}\wedge\star_2[i_{\bn}\bEta^{+,0}]_t+
\oint\limits_{bB_R}\overline{\bxi^{+,0}}\wedge\star_3\bEta^{+,0}\\
=&-\sqrt{\bar{\mu}_1\epsilon_1}\oint\limits_{b\Omega}\star_2\overline{\cU\bm_1}\wedge
\star_2\cU\bj_1
+ \oint\limits_{bB_R}\overline{\bxi^{+,0}}\wedge\star_3\bEta^{+,0}.
\end{split}
\end{equation}
The orientation of $\Gamma$ as the boundary of $\Omega$ is opposite to that as
the boundary of $D,$ and $\star_2$ is a point-wise isometry,
hence~\eqref{symplmap0} shows that these boundary contributions are of equal
magnitude, but of opposite signs.

Using the equations satisfied by these fields we also obtain that:
\begin{equation}
   \int\limits_{D}d(\overline{\bxi^{-,1}}\wedge\star_3\bEta^{-,1})=
\int\limits_{D}[i\omega\epsilon_1(\overline{\bxi^{-,1}}\wedge\star_3\bxi^{-,1})-
i\overline{\omega\mu_1}(\overline{\bEta^{-,1}}\wedge\star_3\bEta^{-,1})].
\end{equation}
and
\begin{equation}
   \int\limits_{\Omega_R}d(\overline{\bxi^{+,0}}\wedge\star_3\bEta^{+,0})=
\int\limits_{\Omega_R}[i\omega\epsilon_0(\overline{\bxi^{+,0}}\wedge\star_3\bxi^{+,0})-
i\overline{\omega\mu_0}(\overline{\bEta^{+,0}}\wedge\star_3\bEta^{+,0})].
\end{equation}
Adding, we obtain that
\begin{equation}
  \int\limits_{B_R}[i\omega\epsilon(\overline{\tbxi}\wedge\star_3\tbxi)-
i\overline{\omega\mu}(\overline{\tbEta}\wedge\star_3\tbEta)]=
\oint\limits_{bB_R}\overline{\bxi^{+,0}}\wedge\star_3\bEta^{+,0}.
\end{equation}
The real part of the left hand side is non-positive as $\Re(i\omega\epsilon)$
and $-\Re(i\overline{\omega\mu})$ are non-positive and both
$\overline{\tbxi}\wedge\star_3\tbxi$ and $\overline{\tbEta}\wedge\star_3\tbEta$ are
point-wise non-negative. On the other hand we see that the radiation condition
implies that the integral over the sphere can be written as
\begin{equation}
  \oint\limits_{bB_R}\overline{\bxi^{+,0}}\wedge\star_3\bEta^{+,0}=
\overline{\sqrt{\frac{\mu_0}{\epsilon_0}}}\oint\limits_{bB_R}
\overline{i_{\hbx}\bEta^{+,0}}
\wedge\star_3\bEta^{+,0}+o(1).
\end{equation}
As $R$ tends to infinity, the real part of the right hand side tends to a non-negative
number. 

Arguing as in M\"uller, we easily conclude that $(\bxi^{+,0},\bEta^{+,0})$
vanishes identically in $\Omega.$ Equation~\eqref{bcpl0} then implies that $\bj_1$
and $\bm_1$ are also zero. Finally, equations~\eqref{j1r1r0},~\eqref{m1q1q0}, and the
orthogonality of the Hodge decomposition imply that the Debye sources,
$(r_0,q_0,r_1,q_1,\bj_H,\bm_H),$ are themselves zero.
\end{proof}

Now suppose that $D=D_1\sqcup\dots\sqcup D_N,$ has $N>1$ connected
components. In the component, $D_l,$ we have EM parameters
$(\epsilon_{0l},\mu_{0l}).$ On each boundary component we have (inner and
outer) currents $(\bj_{1l},\bm_{1l},\bj_{0l},\bm_{0l});$ on $\Gamma_l$ they
satisfy the relations:
\begin{equation}\label{crtsrcl}
  \bj_{0l}=\sqrt{\frac{\epsilon_1}{\epsilon_{0l}}}
\cU_l\bj_{1l}\text{ and
}\bm_{0l}=\sqrt{\frac{\mu_1}{\mu_{0l}}}\cU_l\bm_{1l}
\quad l=1,\dots,N,
\end{equation}
where $\cU_l:\cC^0(bD_l;\Lambda^1)\to\cC^0(bD_l;\Lambda^1)$ is a choice of
clutching map satisfying the conditions in Definition~\ref{clmp}.  These boundary currents
are therefore defined by Debye sources
\begin{equation}
\DS_l=(r_{0l},q_{0l},r_{1l},q_{1l},\bj_{\bH l},\bm_{\bH l}),\quad l=1,\dots,N,
\end{equation}
via relations analogous to~\eqref{j1r1r0} and~\eqref{m1q1q0}:
\begin{equation}\label{dbsrccrtn2}
  \begin{split}
    \bj_{1l}&=i\omega(\sqrt{\mu_1\epsilon_1}d_{\Gamma}R_0r_{1l}-\epsilon_{0l}\sqrt{\frac{\mu_{0l}}{\epsilon_1}}
\star_2d_{\Gamma}R_0r_{0l})+\bj_{Hl},\\
\bm_{1l}&=i\omega(\sqrt{\mu_1\epsilon_1}d_{\Gamma}R_0q_{1l}-\mu_{0l}\sqrt{\frac{\epsilon_{0l}}{\mu_1}}
\star_2d_{\Gamma}R_0q_{0l})+\bm_{Hl}.
  \end{split}
\end{equation}
Now that we have introduced the relations between the Debye sources, and
boundary currents, it is useful to modify the notation introduced
in~\eqref{eqn20} so that the fields depend explicitly on this data. We use
$\cD_{l,1}$ to denote the currents $(\bj_{1l},\bm_{1l})$ defined on $\Gamma_l$
by~\eqref{dbsrccrtn2}, and $\cD_{l,0}$ the currents, $(\bj_{0l},\bm_{0l})$
defined on this surface by~\eqref{dbsrccrtn2} and~\eqref{crtsrcl}. For
$\omega\neq 0,$ we can identify
\begin{equation}
\begin{split}
  \cF^{\pm}(\epsilon_1,\mu_1,\omega,\Gamma_l,\cD_{l,1})&=
\cF^{\pm}(\epsilon_1,\mu_1,\omega,\Gamma_l,\bj_{1l},\bm_{1l})\text{ and }\\
  \cF^{\pm}(\epsilon_{0l},\mu_{0l},\omega,\Gamma_l,\cD_{l,0})&=
\cF^{\pm}(\epsilon_{0l},\mu_{0l},\omega,\Gamma_l,\bj_{0l},\bm_{0l}).
\end{split}
\end{equation}
The fields $\cF^{\pm}(\epsilon_1,\mu_1,\omega,\Gamma_l,\cD_{l,1}),$ and
$\cF^{\pm}(\epsilon_{0l},\mu_{0l},\omega,\Gamma_l,\cD_{l,0})$ are smooth
functions of the Debye sources, even  as $\omega$ goes to zero. If we need to
refer to specific field components, then we use the notation $\cF^{\pm}_{\bxi}, \cF^{\pm}_{\bEta}.$

The boundary condition,~\eqref{diebc}, defining the dielectric problem  is now
assumed to hold on each boundary component. We define a solution to the problem
via the following prescription:
\begin{equation}\label{diecslnN}
  \begin{split}
(\bxi,\bEta)&=\sum_{l=1}^{N}\cF^{+}(\epsilon_1,\mu_1,\omega,\Gamma_l,\cD_{l,1})\text{
  in }\Omega\\
(\bxi,\bEta)&=\cF^{-}(\epsilon_{0l},\mu_{0l},\omega,\Gamma_l,\cD_{l,0})\text{
  in }D_l\quad l=1,\dots,N.
\end{split}
\end{equation}
The currents in these expressions are assumed to be defined in terms of the
scalar sources and harmonic 1-forms by the relations in~\eqref{dbsrccrtn2}
and~\eqref{crtsrcl}.

We can now prove the uniqueness theorem in this case.
\begin{theorem}\label{thm02} Let $bD$ have $N$ components and fix $\omega\neq
  0,$ so that
  $$\{\omega\mu_1,\omega\epsilon_1,\omega\mu_{0l},\omega\epsilon_{0l}:\:
  l=1,\dots, N\}$$  
  have non-negative imaginary parts and the ratios
  $\Re(\mu_{0l}/\epsilon_{0l})>0.$ Let $\DS_l$ denote the Debye source data
  defining solutions to \THME[$k_{0l}$] in $D_l,$ resp. \THME[$k_{1}$] in
  $\Omega$ as specified in~\eqref{diecslnN}, where the boundary currents
  satisfy~\eqref{crtsrcl} and~\eqref{dbsrccrtn2}. If $(\bxi,\bEta)$
  satisfies~\eqref{diebc} with $\bj^{\In}=\bm^{\In}=0,$ then the Debye sources,
  $\{\DS_l\}$ are also zero.
\end{theorem}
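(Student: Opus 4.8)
The plan is to reduce Theorem~\ref{thm02} to the single-component case already established in Theorem~\ref{thm01} by the usual device of treating each bounded region together with the common exterior, and exploiting that the clutching maps, the jump relations, and the radiation-condition energy estimate all decompose over the components $\Gamma_l$. First I would invoke M\"uller's uniqueness theorem, exactly as in the proof of Theorem~\ref{thm01}: since $(\bxi,\bEta)$ given by~\eqref{diecslnN} is an outgoing solution to \THME[$k_1$] in all of $\Omega$ with vanishing tangential jumps across every $\Gamma_l$ (because $\bj^{\In}=\bm^{\In}=0$), and the interior fields $\cF^{-}(\epsilon_{0l},\mu_{0l},\omega,\Gamma_l,\cD_{l,0})$ solve \THME[$k_{0l}$] in $D_l$, the hypothesis $\Re(\mu_{0l}/\epsilon_{0l})>0$ together with the non-negative imaginary parts of $\omega\epsilon_{0l},\omega\mu_{0l},\omega\epsilon_1,\omega\mu_1$ forces $(\bxi^{+,1},\bEta^{+,1})=(0,0)$ in $\Omega$ and $(\bxi^{-,0l},\bEta^{-,0l})=(0,0)$ in each $D_l$.

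Next I would introduce, as in Theorem~\ref{thm01}, the ``swapped'' field $(\tbxi,\tbEta)$ which equals $\sum_l \cF^{+}(\epsilon_{0l},\mu_{0l},\omega,\Gamma_l,\cD_{l,0})$ in $\Omega$ and equals $\cF^{-}(\epsilon_1,\mu_1,\omega,\Gamma_l,\cD_{l,1})$ in each $D_l$. The jump relations~\eqref{jumpconds}, applied on each $\Gamma_l$ separately, give $\bxi^{+,0l}_t=-\sqrt{\mu_{0l}}\star_2\bm_{0l}$, $(i_{\bn}\bEta^{+,0l})_t=\sqrt{\epsilon_{0l}}\bj_{0l}$ and $\bxi^{-,1l}_t=\sqrt{\mu_1}\star_2\bm_{1l}$, $(i_{\bn}\bEta^{-,1l})_t=-\sqrt{\epsilon_1}\bj_{1l}$; the clutching relation~\eqref{crtsrcl} then rewrites the exterior traces on $\Gamma_l$ in terms of $\cU_l\bj_{1l}$ and $\cU_l\bm_{1l}$, exactly paralleling~\eqref{bcpl0}. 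Now I would run the two Stokes-theorem computations: one over each $D_l$ for $d(\overline{\bxi^{-,1l}}\wedge\star_3\bEta^{-,1l})$, and one over $\Omega_R=\Omega\cap B_R(0)$ for $d(\overline{\tbxi}\wedge\star_3\tbxi)$ whose boundary now consists of the sphere $bB_R$ together with all the $\Gamma_l$. Because $\star_2$ is a pointwise isometry and each $\cU_l$ satisfies the symplectic identity~\eqref{symplmap0} on $bD_l$, the boundary contribution along $\Gamma_l$ in the exterior integral exactly cancels (up to the orientation flip between $\partial\Omega$ and $\partial D$) the corresponding interior integral over $D_l$; summing over $l$, all interface terms drop out and one is left with
\begin{equation}
  \int\limits_{B_R}[i\omega\epsilon(\overline{\tbxi}\wedge\star_3\tbxi)-i\overline{\omega\mu}(\overline{\tbEta}\wedge\star_3\tbEta)]=\oint\limits_{bB_R}\overline{\tbxi}\wedge\star_3\tbEta,
\end{equation}
with the understanding that $\epsilon,\mu$ take the value $\epsilon_{0l},\mu_{0l}$ on $D_l$ and $\epsilon_1,\mu_1$ on $\Omega$. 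The sign argument is then verbatim from Theorem~\ref{thm01}: the real part of the left side is non-positive while the radiation condition makes $\Re\oint_{bB_R}\overline{\tbxi}\wedge\star_3\tbEta\to$ a non-negative limit, forcing $(\tbxi,\tbEta)\equiv 0$; in particular $\bj_{1l}=\bm_{1l}=0$ for every $l$ from the analogue of~\eqref{bcpl0}, and then~\eqref{dbsrccrtn2} together with the orthogonality of the Hodge decomposition on each $\Gamma_l$ forces every $\DS_l$ to vanish.

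The only genuinely new bookkeeping beyond Theorem~\ref{thm01} is checking that the interface terms cancel \emph{component by component} — i.e. that there is no cross-talk between $\Gamma_l$ and $\Gamma_{l'}$ for $l\neq l'$ — and that the single energy identity is legitimately obtained by summing $N$ interior identities and one exterior identity. This is where I expect to spend the most care: one must note that $\bigcup_l\cF^{-}(\cdot,\Gamma_l,\cD_{l,0})$ restricted to $D_l$ is simply $\cF^{-}(\cdot,\Gamma_l,\cD_{l,0})$ (the other summands, being $\cF^{-}$ of sources on disjoint surfaces, do not appear inside $D_l$ in~\eqref{diecslnN}), so the interior Stokes computations genuinely localize; and on the exterior side $\overline{\tbxi}\wedge\star_3\tbEta$ restricted to $\Gamma_l$ only sees the trace data attached to that component in the pairing $\oint_{bD}\overline{\balpha}\wedge\cU\bBeta$, because the symplectic form $\oint_{bD}\overline{\balpha}\wedge\bBeta=\sum_l\oint_{\Gamma_l}\overline{\balpha}\wedge\bBeta$ is block-diagonal over components and each $\cU_l$ acts within its block. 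Once this localization is made explicit the rest is a routine transcription of the single-component argument, so I would state it compactly: ``The proof is identical to that of Theorem~\ref{thm01}, applied on each component $\Gamma_l$ separately and summed, using that the symplectic pairing and the clutching maps are block-diagonal with respect to the decomposition $bD=\Gamma_1\sqcup\dots\sqcup\Gamma_N$.''
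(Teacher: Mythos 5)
Your opening step (M\"uller's uniqueness forces $(\bxi,\bEta)\equiv 0$ throughout $\bbR^3$) and your closing step (recovering $\DS_l=0$ from $(\bj_{1l},\bm_{1l})=0$ via the Hodge decomposition) match the paper. The gap is in the middle: the ``global'' swapped field you propose does not work when the interior parameters differ between components. In $\Omega$ you set $\tbxi=\sum_l\cF^{+}_{\bxi}(\epsilon_{0l},\mu_{0l},\omega,\Gamma_l,\cD_{l,0})$, a superposition of fields satisfying \THME[$k_{0l}$] with, in general, $N$ \emph{distinct} wave numbers $k_{0l}=\omega\sqrt{\epsilon_{0l}\mu_{0l}}$. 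Such a sum satisfies no single Maxwell system in $\Omega$, so the identity $d(\overline{\tbxi}\wedge\star_3\tbEta)=i\omega\epsilon\,\overline{\tbxi}\wedge\star_3\tbxi-i\overline{\omega\mu}\,\overline{\tbEta}\wedge\star_3\tbEta$ is unavailable there: the cross terms $\overline{\bxi_l}\wedge\star_3\bEta_{l'}$ with $l\neq l'$ produce densities such as $i\omega\epsilon_{0l'}\,\overline{\bxi_l}\wedge\star_3\bxi_{l'}$ that are not sign-definite, and the radiation-condition argument on $bB_R$, which relies on a single ratio $\sqrt{\mu/\epsilon}$ relating $\bxi$ to $i_{\hbx}\bEta$ at infinity, likewise fails for a superposition of outgoing waves with different $k_{0l}$. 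A second defect: the trace identities you assert on $\Gamma_l$ (e.g.\ $\tbxi^{+}_t=-\sqrt{\mu_{0l}}\star_2\bm_{0l}$) are contaminated by the restrictions to $\Gamma_l$ of the summands attached to the \emph{other} components; these do not vanish, since the vanishing of the original field only controls $\cF^{-}(\epsilon_{0l'},\ldots)$ inside $D_{l'}$, not the values of that layer potential on $\Gamma_l$ for $l\neq l'$.

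The paper avoids both problems by arguing one component at a time (see~\eqref{flpdflds}): for fixed $l_0$ it swaps only the $l_0$-th parameters, taking $\cF^{+}(\epsilon_{0l_0},\mu_{0l_0},\omega,\Gamma_{l_0},\cD_{l_0,0})$ --- a single field with a single wave number --- in all of $\bbR^3\setminus D_{l_0}$, and in $D_{l_0}$ the field $\sum_{l\neq l_0}\cF^{+}(\epsilon_1,\mu_1,\omega,\Gamma_l,\cD_{l,1})+\cF^{-}(\epsilon_1,\mu_1,\omega,\Gamma_{l_0},\cD_{l_0,1})$, all sharing the common wave number $k_1$; the extra sum over $l\neq l_0$ is precisely what makes the interior trace on $\Gamma_{l_0}$ equal to $\sqrt{\mu_1}\star_2\bm_{1l_0}$ with no cross terms, yielding~\eqref{bcpl00} and~\eqref{bcpl0l}. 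The single-component energy argument then applies verbatim across the one interface $\Gamma_{l_0}$, giving $(\bj_{1l_0},\bm_{1l_0})=(0,0)$, and one repeats for each $l_0$. Your argument would be correct if all the $(\epsilon_{0l},\mu_{0l})$ coincided, but not in the generality of the theorem; the global swap must be replaced by the component-by-component swap.
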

\begin{proof} As before, M\"uller's uniqueness theorem shows that
  $(\bxi,\bEta)=(0,0)$ throughout $\bbR^3.$  To show that the data
  itself is zero, we simply apply the argument in the proof of
  Theorem~\ref{thm01} to one component of $D$ at a time.  Fix an $1\leq l_0\leq
  N,$ and define
  \begin{equation}\label{flpdflds}
    \begin{split}
      (\tbxi_{l_0},\tbEta_{l_0})&=\cF^{+}(\epsilon_{l_0},\mu_{l_0},\omega,\Gamma,\cD_{l_0,0})
\quad\text{ in }\quad\bbR^3\setminus D_{l_0}\\
(\tbxi_{l_0},\tbEta_{l_0})&=
\sum_{l\neq
  l_0}^{N}\cF^{+}(\epsilon_1,\mu_1,\omega,\Gamma_l,\cD_{l_0,1})+
\cF^{-}(\epsilon_1,\mu_1,\omega,\Gamma_l,\cD_{l_0,1})\text{ in }D_{l_0}.
    \end{split}
  \end{equation}
  These fields satisfy \THME[$k_{l_0}$] in $\bbR^3\setminus D_{l_0}$ and
  \THME[$k_1$] in $D_{l_0}.$ Since $(\bxi,\bEta)$ vanish identically, by using
  the jump conditions and~\eqref{crtsrcl}, we see that, along $\Gamma_{l_0},$
  we have the boundary data:
\begin{equation}\label{bcpl00}
    \tbxi^{-}_{l_0t}=\sqrt{\mu_1}\star_2\bm_{1l_0},
\quad (i_{\bn}\tbEta^{-}_{l_0})_{t}=-\sqrt{\epsilon_1}\bj_{1l_0},
  \end{equation}
and
\begin{equation}\label{bcpl0l}
  \tbxi^{+}_{l_0t}=-\sqrt{\mu_1}\star_2\cU\bm_{1l_0},\quad (i_{\bn}\tbEta^{+}_{l_0})_{t}=\sqrt{\epsilon_1}\cU\bj_{1l_0}.
\end{equation}
By applying the integration by parts argument from the proof of
Theorem~\ref{thm01}, we can conclude that $(\tbxi_{l_0}^+,\tbEta_{l_0}^+)$ vanishes
identically. The boundary condition in~\eqref{bcpl0l} then implies that 
\begin{equation}
  (\bj_{1l_0},\bm_{1l_0})=(0,0).
\end{equation}
Repeating this for each boundary component, and applying the Hodge
decomposition completes the proof of the theorem.
\end{proof}

The naive limit of the dielectric problem, as $\omega\to 0,$ leads to a pair of
uncoupled, underdetermined problems:
\begin{equation}\label{thme0}
  d\bxi=d^*\bxi=0\text{ and }d\bEta=d^*\bEta=0,
\end{equation}
where $(\bxi,\bEta)$ satisfy~\eqref{diebc} along $bD.$ If we let $u^{\pm}$
denote a function which is harmonic in $\bbR^3\setminus bD$ such that 
\begin{equation}
  u^{+}\restrictedto_{bD}=u^{-}\restrictedto_{bD},
\end{equation}
then the 1-form defined by
\begin{equation}
  \bxi^+=du^+\text{ in }\Omega\text{ and }\bxi^-=du^-\text{ in }D,
\end{equation}
satisfies the system of equations~\eqref{thme0} and the tangential component
has no jump across $bD.$ As we shall see, our representation of solutions
suggests that, at $\omega=0,$ we should also make use of conditions on
the normal components of the form:
\begin{equation}\label{nrmbc0}
  \epsilon_0\bxi^-_n-\epsilon_1\bxi^+_n=f\text{ and }
\mu_0(\star_3\bEta^-)_n-\mu_1(\star_3\bEta^+)_n=h.
\end{equation}
If $\omega\neq 0,$ then a condition of this type is a
consequence of~\eqref{diebc} and the Maxwell equations.
In the case of a scattering problem,
\begin{equation}\label{nrmbc1}
 f = \epsilon_1 \bj^{\In}_n  \text{ and }
h = \mu_1 \bm^{\In}_n,
\end{equation}
which can be applied for any $\omega$.
If we append this condition, then the uniqueness
theorem above extends to the zero frequency case. We state the result for the
$\bxi$-field, the analogue for the $\bEta$-field follows by application of
$\star_3.$
\begin{theorem}\label{thm03}
  Let $\bxi$ be an outgoing (zero-frequency) solution to~\eqref{thme0} defined
  in $\bbR^3\setminus bD$ such that, along $bD_l,$ $l=1,\dots,N,$ we have:
  \begin{equation}\label{diebc0hom}
    \bxi^+_t-\bxi^-_t=0\text{ and }\epsilon_{0l}\bxi^-_n-\epsilon_1\bxi^+_n=0.
  \end{equation}
Suppose that $\{\epsilon_1,
\epsilon_{0l}:\: l=1,\dots,N\}$ all have positive real part.
  If the normal components $\bxi^{\pm}_n$ have mean zero on every component of
  $bD,$ then the solution $\bxi$ vanishes identically.  
\end{theorem}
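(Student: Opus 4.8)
The plan is to reduce the zero-frequency statement to a standard energy/potential-theory argument, exactly as one does for the scalar Neumann-type transmission problem. First I would observe that since $d\bxi=0$ in each of $\Omega$ and the $D_l$, and these regions are simply connected in cohomological degree $1$ (or at least: $\bxi$ is exact on each piece once we know it carries no period obstruction — this is the point of the remark preceding the theorem that in the dielectric case the harmonic projections of the jumps span all of $H^1_{\dR}(bD)$), we may write $\bxi^+=du^+$ in $\Omega$ and $\bxi^-=du^-$ in $D$ for functions $u^\pm$ that are harmonic in $\bbR^3\setminus bD$. The condition $\bxi^+_t-\bxi^-_t=0$ along each $bD_l$ says $d_{bD_l}(u^+-u^-)=0$, so $u^+-u^-$ is locally constant on $bD$; adjusting $u^-$ by an additive constant on each component of $D$, we may assume $u^+=u^-$ on $bD$. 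The outgoing/decay hypothesis forces $u^+=O(|x|^{-1})$ at infinity (a harmonic function whose gradient is $O(|x|^{-2})$, with the Debye-source representation guaranteeing no $\log$ or constant term).

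Next I would run the Green's identity. Set $\Omega_R=\Omega\cap B_R(0)$ and integrate $\nabla\cdot(u\,\epsilon\nabla u)$ over $D_l$ (with weight $\epsilon_{0l}$) and over $\Omega_R$ (with weight $\epsilon_1$), where $u$ denotes the piecewise-defined potential. Since $u$ is harmonic in each region this produces
\begin{equation}
  \sum_{l=1}^N \epsilon_{0l}\int_{D_l}|\nabla u|^2\,dV
  +\epsilon_1\int_{\Omega_R}|\nabla u|^2\,dV
  =\sum_{l=1}^N\int_{bD_l}u\,(\epsilon_{0l}\pa_n u^- -\epsilon_1\pa_n u^+)\,dA
  +\int_{bB_R}u\,\epsilon_1\pa_n u^+\,dA,
\end{equation}
with a sign convention in which the interior and exterior boundary terms on $bD_l$ combine with the right orientation because $u^+=u^-$ there. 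The boundary contribution over $bD_l$ is precisely $\int_{bD_l}u^\pm(\epsilon_{0l}\bxi^-_n-\epsilon_1\bxi^+_n)\,dA$, which vanishes by the second hypothesis in~\eqref{diebc0hom}. The term on $bB_R$ is $O(R^2)\cdot O(R^{-1})\cdot O(R^{-2})=O(R^{-1})\to 0$. Letting $R\to\infty$ and using $\Re\epsilon_1>0$, $\Re\epsilon_{0l}>0$, the left side (its real part) is a sum of non-negative quantities that must vanish, so $\nabla u\equiv 0$ in every region; hence $\bxi\equiv 0$.

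The one genuinely delicate point — the main obstacle — is justifying the step ``$\bxi$ is exact on each region,'' i.e.\ that there is no homological period obstruction at $\omega=0$. For $\Omega=\bbR^3\setminus\overline D$ this requires $H^1_{\dR}(\Omega)=0$, which holds when $D^c$ is connected and each $D_l$ is simply connected, but in the stated generality one must instead argue directly from the Debye representation: the closed form $\bxi^\pm$ produced by~\eqref{diepr1comp} with purely harmonic currents at $\omega=0$ has prescribed periods, and the jump/clutching relations together with the mean-zero condition on $\bxi^\pm_n$ (and the invocation preceding the theorem that the harmonic projections of the jumps span $H^1_{\dR}(bD)$) force those periods to vanish. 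Equivalently, one shows the harmonic-field part of $\bxi$ is constrained to be zero by feeding the $bD_l$-boundary condition into the symplectic pairing on $\cH^1(bD)$ as in the proof of Theorem~\ref{thm01}; only after this reduction does the scalar-potential argument above apply. I would carry this topological reduction out first, then the Green's-identity computation is routine. The mean-zero hypothesis on the normal components is exactly what kills the remaining finite-dimensional (constant-potential / harmonic) ambiguity, so it enters at precisely this step.
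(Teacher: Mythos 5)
Your overall strategy --- reduce to scalar harmonic potentials $u^\pm$, run the weighted Green's identity with weights $\epsilon_{0l},\epsilon_1$, use $\Re\epsilon>0$ and the decay at infinity --- is exactly the paper's, and that part of your argument is fine. The problem is the step you yourself flag as ``the main obstacle'': establishing exactness. Your primary justification, that $\Omega$ and the $D_l$ are ``simply connected in cohomological degree $1$,'' is false in the stated generality: if $D$ is a solid torus, then $H^1_{\dR}(\Omega)\neq 0$ (and $H^1_{\dR}(D)\neq 0$), so closedness of $\bxi^\pm$ alone does not give potentials. Your fallback --- extract the vanishing of the periods from the Debye representation and the clutching relations --- does not apply here either, because Theorem~\ref{thm03} is a pure uniqueness statement about an \emph{arbitrary} outgoing solution of~\eqref{thme0} satisfying~\eqref{diebc0hom}; no Debye ansatz is hypothesized (that is the content of Theorem~\ref{thm04}, which is a separate statement). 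So as written the proposal has a genuine hole precisely at the point you identified.

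The missing idea is topological and uses only the boundary condition: since $d\bxi^\pm=0$, the tangential traces $\bxi^\pm_t$ are closed $1$-forms on $bD$, and $[\bxi^+_t]_{\dR}$ lies in the image of the (injective) restriction $H^1_{\dR}(\Omega)\hookrightarrow H^1_{\dR}(bD)$ while $[\bxi^-_t]_{\dR}$ lies in the image of $H^1_{\dR}(D)\hookrightarrow H^1_{\dR}(bD)$. By Mayer--Vietoris these two subspaces are complementary in $H^1_{\dR}(bD)$, and the first condition in~\eqref{diebc0hom} says the two classes are equal; hence both vanish. This gives $\bxi^\pm_t=df^\pm$ on $bD$, and one then takes $u^\pm$ to be the harmonic extensions of $f^\pm$ rather than antiderivatives of $\bxi^\pm$ in the solid regions. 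One further point you gloss over: $\bxi^+-du^+$ has vanishing tangential trace but could a priori be a nonzero exterior Dirichlet field; it is exactly here (not merely for ``constants'') that the mean-zero hypothesis on $\bxi^+_n$ is used, via the classical uniqueness result for harmonic fields (Theorem 5.7 of Colton--Kress), to conclude $\bxi^+\equiv du^+$ before the energy identity is invoked. With those two steps supplied, the rest of your computation goes through as in the paper.
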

\begin{proof} 
For  $\balpha$ a closed 1-form along $bD,$ we let $[\balpha]_{\dR}$ denote its
  class in $H^1_{\dR}(bD).$ As $d\bxi^{\pm}=0,$ the tangential restrictions
  $\bxi^{\pm}_t$ are closed 1-forms. The tangential boundary condition
  in~\eqref{diebc0hom} implies that
  \begin{equation}
    [\bxi^+_t]_{\dR}= [\bxi^-_t]_{\dR}.
  \end{equation}
  On the other hand, the Mayer-Vietoris theorem implies that $[\bxi^+_t]_{\dR}$
  and $[\bxi^-_t]_{\dR}$ belong to complementary subspaces of $H^1_{\dR}(bD)$
  and therefore both must vanish. Hence there are functions $f^{\pm}$ defined
  on $bD$ so that $\bxi^{\pm}_t=df^{\pm}.$ We let $u^-$ denote the harmonic
  function in $D$ with $u^-\restrictedto_{bD}=f^-,$ and $u^+$ the outgoing
  harmonic function in $\Omega$ with $u^+\restrictedto_{bD}=f^+.$ The field
  $\bxi^--du^-$ has vanishing tangential components on $bD$ and is therefore
  identically zero in $D.$ The tangential components of $\bxi^+-du^+$ are zero,
  showing that this difference is a sum of Dirichlet fields. As $\bxi^+_n$ has
  vanishing mean on each component of $bD$ it follows from Theorem 5.7
  in~\cite{ColtonKress} that, in fact, $\bxi^+\equiv du^+.$ If we let
  $\Omega_R=\Omega\cap B_R(0),$ then the normal boundary condition
  in~\eqref{diebc0hom} easily implies that
  \begin{equation}
    \epsilon_1\int\limits_{\Omega_R}|du^+|^2dx+\sum_{l=1}^N
    \epsilon_{0l}\int\limits_{D_l}|du^-|^2dx=
\int\limits_{bB_R}u^+\overline{\pa_r u^+}dS.
  \end{equation}
As $u^+$ is outgoing, the limit, as $R\to\infty,$ of the integral over $bB_R$
is zero. This completes the proof that $\bxi\equiv 0.$ 
\end{proof}

The uniqueness of the representation of solutions to~\eqref{diebc0hom} at
zero frequency, via Debye source data turns out to depend on the choice of
clutching map $\cU:\cH^1(b\Omega)\to \cH^1(b\Omega).$ Of course $\cU=\star_2$
is such a map. Whether this suffices to prove uniqueness at $\omega=0$ turns
out to depend on a surprisingly subtle property of the Hodge star-operator on
$bD$ relative to the splitting of $H^1_{\dR}(bD)$ into the disjoint Lagrangian
subspaces (w.r.t. the symplectic form defined below in~\eqref{sympfrm1}) 
$$ H^1_{\dR}(bD)\simeq H^1_{\dR}(D)\restrictedto_{bD}\oplus H^1_{\dR}(\Omega)\restrictedto_{bD}.$$ 
We let $\cH^1_{D}(bD)$ denote
harmonic representatives for the image of the injective map
$H^1_{\dR}(D)\hookrightarrow H^1_{\dR}(bD),$ and $\cH^1_{\Omega}(bD)$ harmonic
representatives for
the image of $H^1_{\dR}(\Omega)\hookrightarrow
H^1_{\dR}(bD).$ As noted above, the Hodge star-operator maps harmonic forms to harmonic forms.
\begin{definition}
We say that $bD$ is \emph{H-generic} if
\begin{equation}\label{Hgencond}
  \star_2\cH^1_{\Omega}(bD)\cap\cH^1_{D}(bD)=\{0\}.
\end{equation}
\end{definition}
Evidently a simply connected manifold is H-generic. More generally,
the property of H-genericity depends only the conformal structure on $bD$
(induced from its embedding into $\bbR^3$) and the splitting
\begin{equation}
  H^1_{\dR}(bD)=H^1_{\dR}(D)\restrictedto_{bD}\oplus
H^1_{\dR}(\Omega)\restrictedto_{bD},
\end{equation}
which depends on
the isotopy class of the embedding of $bD\hookrightarrow \bbR^3.$

It is a deep theorem in algebraic geometry, stemming form work of Riemann, that
the set of H-generic structures is the complement of a real analytic
hypersurface in Teichm\"uller space, and therefore open and dense. 
On the other hand, there certainly exist surfaces for which~\eqref{Hgencond}
fails. For example if $bD$ is a torus of revolution, as described in Example
1.5 of~\cite{EpGr2}, then we have
$\star_2\cH^1_{\Omega}(bD)\cap\cH^1_{D}(bD)=\cH^1_{D}(bD).$ If we were to take
$\cU=\star_2,$ in such a case, then the Debye source
representation at zero frequency would have a   non-trivial null-space  of
dimension equal to $\dim \star_2\cH^1_{\Omega}(bD)\cap\cH^1_{D}(bD).$ This
would then lead to a mild form of low frequency breakdown. To avoid this
eventuality we need to make a different choice of $\cU.$

\begin{definition} \label{clutchdef}
A clutching map $\cU:\cH^1(b\Omega)\to \cH^1(b\Omega),$ 
 is \emph{admissible} if, for each component $bD_l$ of $b\Omega$, we
  have
  \begin{equation}
    \cU_l\star_2\cH^1_{D_l}(bD_l)\cap \star_2\cH^1_{D_l^c}(bD_l)=\{0\}.
  \end{equation}
\end{definition}
It is easy to see that such maps always exist. As we do the construction one
component at a time, we can restrict attention to the case that $bD$ is
connected. We let $\omega$ denote the hermitian symplectic form
\begin{equation}\label{sympfrm1}
  \omega(\balpha,\bBeta)=\oint\limits_{bD}\overline{\balpha}\wedge\bBeta.
\end{equation}
This is a non-degenerate pairing that is well defined on $H^1_{\dR}(bD).$
Moreover,
\begin{equation}
  \omega(\star_2\balpha,\star_2\bBeta)=\omega(\balpha,\bBeta).
\end{equation}
The topological interpretation of this pairing implies that both
$\star_2\cH^1_{D}(bD)$ and $\star_2\cH^1_{D^c}(bD)$ are Lagrangian
subspaces; that is, $\omega$ restricted to these subspaces is identically zero. In fact
$\cH^1_{D}(bD)$ is a Lagrangian subspace complementary to $\star_2\cH^1_{D}(bD),$
and these subspaces are complexifications of real Lagrangian subspaces. The map
$\cU_l=\Id$ on $\cH^1(bD_l)$ is always an admissible clutching map. For if
\begin{equation}
  \balpha\in \star_2\cH^1_{D_l}(bD_l)\cap \star_2\cH^1_{D_l^c}(bD_l),
\end{equation}
then
\begin{equation}
  \star_2 \balpha\in \cH^1_{D_l}(bD_l)\cap \cH^1_{D_l^c}(bD_l)=\{0\}.
\end{equation}

\begin{theorem}\label{thm04}
  Suppose that every component of $bD$ is $H$-generic, in which case we
  take $\cU=\star_2,$ or that $\cU$ is an admissible clutching map.  If an
  outgoing (zero-frequency) solution to~\eqref{thme0} in $\bbR^3\setminus bD,$ defined by Debye
  source data $\{\cD_l\}$ via~\eqref{diecslnN}, vanishes, then the data
  defining it vanishes as well.
\end{theorem}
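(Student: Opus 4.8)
The plan is to reduce the zero-frequency uniqueness statement to Theorem~\ref{thm03}, exactly as the positive-frequency uniqueness (Theorem~\ref{thm02}) was reduced to M\"uller's theorem plus an integration-by-parts argument. The first step is to invoke Theorem~\ref{thm03}: the given Debye source data defines, via~\eqref{diecslnN} at $\omega=0,$ an outgoing solution $(\bxi,\bEta)$ to~\eqref{thme0} with vanishing jumps~\eqref{diebc0hom} (the normal jump being automatic at $\omega=0$ since $\bj^{\In}=\bm^{\In}=0$), and the normal components $\bxi^\pm_n$ have mean zero on each $bD_l$ because, by construction, the scalar Debye sources $(r_{0l},r_{1l})$ lie in $\cM_{\Gamma,0}$ and these control the normal components through the $\frac12(\pm r)$ terms in~\eqref{nrmds0}. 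Hence Theorem~\ref{thm03} gives $\bxi\equiv 0,$ and likewise $\star_3\bEta\equiv 0$ after applying the same argument to the $\bEta$-field.

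Next I would run the component-by-component flip argument from the proof of Theorem~\ref{thm02}. Fix $l_0$ and form the ``flipped'' field $(\tbxi_{l_0},\tbEta_{l_0})$ that uses the interior parameters on the outside of $D_{l_0}$ and the exterior parameters inside, as in~\eqref{flpdflds}. Since $(\bxi,\bEta)\equiv0,$ the jump relations~\eqref{jumpconds} and the clutching relation~\eqref{crtsrcl} give boundary data along $\Gamma_{l_0}$ of the form $\tbxi^-_{l_0t}=\sqrt{\mu_1}\star_2\bm_{1l_0}$ and $\tbxi^+_{l_0t}=-\sqrt{\mu_1}\star_2\cU\bm_{1l_0}$ (and the analogous normal-component relations for $\tbEta$). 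At $\omega=0,$ however, $\bj_{1l_0}$ and $\bm_{1l_0}$ are \emph{purely harmonic} — all the $d_\Gamma R_0$ terms in~\eqref{dbsrccrtn2} vanish with $\omega$ — so the flipped field has purely harmonic tangential data. This is where the argument must depart from the $\omega\neq0$ case: the energy identity at zero frequency degenerates, so I cannot simply rerun the Stokes-theorem estimate. Instead, $\tbxi^\pm_{l_0}$ is forced to be a harmonic (Dirichlet/Neumann) field on each side, and one concludes $\tbxi^+_{l_0}=du^+$ for an outgoing harmonic $u^+$ as in Theorem~\ref{thm03}, with $\tbxi^-_{l_0}$ a Dirichlet field in $D_{l_0}.$

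The crux is then a purely cohomological/linear-algebra statement on $\cH^1(bD_{l_0})$: the two harmonic 1-forms $\star_2\cU\bm_{1l_0}$ (which is the tangential class of the exterior side) and $\star_2\bm_{1l_0}$ (interior side) must both represent classes extendable respectively over $D_{l_0}^c$ and over $D_{l_0},$ i.e.\ the tangential class $[\star_2\cU\bm_{1l_0}]_{\dR}\in\cH^1_{D_{l_0}^c}(bD_{l_0})$ while simultaneously $\star_2\cU\bm_{1l_0}\in\cU_{l_0}\star_2\cH^1_{D_{l_0}}(bD_{l_0})$ — so it lies in the intersection $\cU_{l_0}\star_2\cH^1_{D_{l_0}}(bD_{l_0})\cap\star_2\cH^1_{D_{l_0}^c}(bD_{l_0}),$ which is $\{0\}$ precisely by admissibility (Definition~\ref{clutchdef}), or by the $H$-genericity hypothesis~\eqref{Hgencond} when $\cU=\star_2.$ Hence $\bm_{1l_0}=0,$ and symmetrically $\bj_{1l_0}=0.$ The main obstacle is making this intersection bookkeeping precise — tracking which Lagrangian subspace each harmonic form lands in after the $\star_2$'s and $\cU_{l_0}$ are applied, and confirming that the ``extends over the inside'' versus ``extends over the outside'' conditions line up exactly with the two factors of the intersection in Definition~\ref{clutchdef}. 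Once $(\bj_{1l_0},\bm_{1l_0})=(0,0)$ for every $l_0,$ the relations~\eqref{dbsrccrtn2}, the mean-zero normalization, and the orthogonality of the Hodge decomposition on each $\Gamma_l$ force the scalar sources $r_{1l},q_{1l},r_{0l},q_{0l}$ and the harmonic parts $\bj_{\bH l},\bm_{\bH l}$ all to vanish, completing the proof.
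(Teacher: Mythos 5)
Your treatment of the harmonic components is essentially the paper's argument: the flip construction~\eqref{flpdflds} plus the observation that $d\tbxi_{l_0}=0$ on each side places the two tangential traces in $\cH^1_{D_{l_0}}(bD_{l_0})$ and $\cH^1_{D_{l_0}^c}(bD_{l_0})$ respectively, and admissibility (or H-genericity) kills $\bm_{Hl_0}$. (Two small points there: the element that actually lands in the intersection of Definition~\ref{clutchdef} is $\cU\bm_{1l_0}$, not $\star_2\cU\bm_{1l_0}$ --- one has $\bm_{1l_0}\in\star_2\cH^1_{D_{l_0}}$ hence $\cU\bm_{1l_0}\in\cU\star_2\cH^1_{D_{l_0}}$, and $\star_2\cU\bm_{1l_0}\in\cH^1_{D_{l_0}^c}$ hence $\cU\bm_{1l_0}\in\star_2\cH^1_{D_{l_0}^c}$ --- and your opening appeal to Theorem~\ref{thm03} is superfluous, since the vanishing of the fields is the \emph{hypothesis} of Theorem~\ref{thm04}, not something to be derived from vanishing jumps.)

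The genuine gap is your last step. At $\omega=0$ the relations~\eqref{dbsrccrtn2} degenerate to $\bj_{1l}=\bj_{Hl}$ and $\bm_{1l}=\bm_{Hl}$: every term involving the scalar sources carries the prefactor $i\omega$ and vanishes identically. So ``$(\bj_{1l_0},\bm_{1l_0})=(0,0)$ plus Hodge orthogonality'' yields only $\bj_{Hl}=\bm_{Hl}=0$ and gives \emph{no information whatsoever} about $(r_{0l},r_{1l},q_{0l},q_{1l})$ --- exactly the opposite of the situation at $\omega\neq 0$, where you could divide by $i\omega$. The scalar sources still enter the fields at zero frequency, but only through the scalar potential and anti-potential terms $d\phi$ and $d^*\Psi$ in~\eqref{eqn29}, so a separate potential-theoretic argument is required. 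This is what the first half of the paper's proof supplies: once the harmonic parts are known to vanish, $\bxi^-=-\sqrt{\mu_{0l}}\,d\phi_l^-$ in $D_l$ with $\phi_l$ the single-layer potential of density $r_{0l}$; vanishing of $d\phi_l^-$ forces $\phi_l$ to be constant on $bD_l$, and an integration by parts in $D_l^c$, using the normal-derivative jump relation, the mean-zero condition $\int_{bD_l}r_{0l}\,dA=0$, and decay at infinity, forces $\phi_l^+\equiv 0$ and hence $r_{0l}=0$; a parallel argument with the exterior potential $\phi_1=\sum_l\int_{bD_l}g_0(x,y)r_{1l}\,dA$ gives $r_{1l}=0$, and likewise for the $q$'s. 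Without some such argument your proof does not reach the scalar sources at all.
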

\begin{proof}
Note first that, at zero frequency, the Debye source data uncouples, with $\bxi$ determined by  
$$\{(r_{0l},r_{1l},\bm_{Hl}):\: l=1,\dots, N\}\subset \cM_{\Gamma,0}\times\cH^1(bD)$$ 
via~\eqref{diecslnN},  and $\bEta$ determined by 
$$\{(q_{0l},q_{1l},\bj_{Hl}):\: l=1,\dots, N\}\subset \cM_{\Gamma,0}\times\cH^1(bD).$$ 
  We restrict our attention here to the electric field and suppose that the Debye source data
  determines a  1-form $\bxi$ that vanishes
  identically. First we assume that the harmonic 1-forms $\{\bm_{Hl}:\:
  l=1,\dots,N\}$ are all zero. This is always the case if $bD$ is simply
  connected.  In $D_l$ we have
\begin{equation}
  \bxi^-=-\sqrt{\mu_{0l}}d\phi_l^-,
\end{equation}
where
\begin{equation}
  \phi_l=\int\limits_{bD}g_0(x,y)r_{0l}dA.
\end{equation}
The fact that $d\phi_l^-=0$ implies that $\phi_l^-$ is constant, which implies
that $\phi_l^+,$ its continuation to $D_l^c,$ is also constant along $bD_l.$
Integrate by parts, using that $\pa_{\bn}\phi^+=r_{0l},$ and $\phi_l=c$ a
constant, to obtain
\begin{equation}
  \int\limits_{D_l^c}|\nabla\phi_l^+|^2dx=c\int\limits_{bD_l}r_{0l}.
\end{equation}
Either because $c=0$ or because $r_{0l}$ has mean zero we see that, in fact,
$\phi_l^+\equiv 0$ as well, which implies that $r_{0l}=0.$
 Similarly, in $\Omega$ we see that the harmonic function
\begin{equation}
  \phi_1=\sum_{l=1}^N\int\limits_{bD_l}g_0(x,y)r_{1l}dA
\end{equation}
is constant and therefore $0.$ This function is continuous across each boundary
component, so it is also zero in each component of $D_{l}.$ The jump in
the normal derivative of $\phi_l$ across $bD_l$ is $r_{1l},$  which must
also vanish. This completes the case where $\bm_H\equiv 0.$

It remains only to show that the harmonic 1-forms $\{\bm_{Hl}\}$ vanish. For each $1\leq
l_0\leq N$ we use the prescription in~\eqref{flpdflds} to define a harmonic
field in $\bbR^3\setminus bD_{l_0}.$ The relations in~\eqref{bcpl00}
and~\eqref{bcpl0l} show that along $bD_{l_0}$
\begin{equation}
  \tbxi^{-}_{l_0t}=\sqrt{\mu_{1}}\star_2\bm_{Hl_0}\text{ and }
\tbxi^{+}_{l_0t}=-\sqrt{\mu_{1}}\star_2\cU\bm_{Hl_0},
\end{equation}
where $\bm_{Hl_0}$ is a harmonic 1-form on $bD_{l_0}.$ As $d\tbxi_{l_0}=0$ in
$\bbR^3\setminus bD_{l_0},$ we see that
\begin{equation}
  \star_2\bm_{Hl_0}\in \cH^1_{D_{l_0}}(bD_{l_0})\text{ and }
\star_2\cU\bm_{Hl_0}\in \cH^1_{D_{l_0}^c}(bD_{l_0}).
\end{equation}
The assumption that $\cU$ is an admissible clutching map implies that
$\star_2\bm_{Hl_0}=0,$ 
completing the proof.
\end{proof}

To conclude this discussion, we show that in the case that $bD$ is not
H-generic, and we take $\cU=\star_2,$ then the Debye source representation has a
non-trivial nullspace. Suppose that $bD$ is connected and
\begin{equation}\label{nongen0}
\star_2\bm_{H}\in
\star_2\cH^1_{\Omega}(bD)\cap\cH^1_{D}(bD).
\end{equation}  We let $\tbxi_j$, $j=0,1,$ denote the fields
defined in $\bbR^3\setminus bD$ by
\begin{equation}
\begin{split}
  \bxi_1^{\pm}&=\cF_{\bxi}^{\pm}(\epsilon_1,\mu_1,0,bD,\{0,0,\bm_H\})\\
  \bxi_0^{\pm}&=\cF_{\bxi}^{\pm}(\epsilon_0,\mu_0,0,bD,\{0,0,\star_2\bm_H\}).
\end{split}
\end{equation}
The assumption in~\eqref{nongen0} implies that
$\star_2\bm_{H}\in\cH^1_{D}(bD),$ and $\bm_H\in\cH^1_{\Omega}(bD).$ Arguing as
in the proof of Proposition 7.12 in~\cite{EpGr2}, we see that the cohomology
classes $[\bxi_1^+]$ and $[\bxi_0^-]$ are trivial. Thus we can find
\emph{unique} scalar Debye sources $(r_0,r_1)$ defining fields
$d\phi_{j}^{\pm}$ so that
\begin{equation}
  d\phi_1^++\bxi_1^+=0 \text{ and }d\phi_0^-+\bxi_0^-=0.
\end{equation}
This shows that the null-space of the Debye representation at zero frequency
includes a subspace isomorphic to $\star_2\cH^1_{\Omega}(bD)\cap\cH^1_{D}(bD).$
 From the proof of Theorem~\ref{thm04} we easily conclude that this is exactly
the null-space of the representation.

\section{Integral Equations for the Dielectric Problem}\label{frdeqns}
In this section we derive Fredholm integral equations of the second kind for
solving the dielectric problem, in terms of the Debye sources. The dielectric
interface boundary conditions are formulated in~\eqref{diebc}. We rewrite them
slightly for this section
\begin{equation}\label{diebc2}
  \star_2[\bxi^{+,1}_t-\bxi^{-,0}_t]=\star_2\bj^{\In}_t\text{ and }
\star_2[(\star_3\bEta^{+,1})_t-(\star_3\bEta^{-,0})_t]=\star_2\bm^{\In}_t.
\end{equation}
The data $(\bj^{\In},\bm^{\In})$ are arbitrary 1-forms defined on $\Gamma.$ We
usually assume that these are defined as boundary data of a solution
$(\bxi^{\In},\bEta^{\In})$ to the time harmonic Maxwell equations in $\Omega.$ To
obtain equations in terms of the Debye sources, we apply the operators
$G_0\star_2d_{\Gamma}$ and $d_{\Gamma}\star_2$ to these equations. In the simply connected
case this suffices to solve the original problem, as a 1-form $\bk$ on
$\Gamma$ is specified by the scalar functions $\star_2d_{\Gamma}\bk$ and
$d_{\Gamma}^*\bk.$ 

The tangential, and normal operators, $\cT^{\pm}_{\bxi}(k),
\cT^{\pm}_{\bEta}(k),$ and $\cN^{\pm}_{\bxi}(k), \cN^{\pm}_{\bEta}(k),$ used to
construct $\bxi^{\pm,l}$ and $\bEta^{\pm,l}$ are defined in~\eqref{tngtds0}
and~\eqref{nrmds0}.  There are differences between these operators and those
used in the perfect conductor case. For the latter case, we used \eqref{jdefpc}
to define $\bj$ in terms of of a single set of Debye sources.  Here, for $l=0,$
or $1,$ the vector sources $\bj_l,\bm_l$ in~\eqref{tngtds0} and~\eqref{nrmds0}
depend on four scalar potentials $(r_0,q_0,r_1,q_1),$ and possibly the harmonic
1-forms $\bj_H,\bm_H,$ which we abbreviate as $(r,q),$ $(r,q,\bj_H,\bm_H)$
resp. Moreover, the vector sources $\bj_1$ and $\bm_1$ are independent of one
another, with the inner and outer sources, $(\bj_0,\bm_0),$ $(\bj_1,\bm_1)$,
satisfying the relations in~\eqref{crtsrc}. Notice also that the wave numbers
are different in different regions of space, so that, e.g. we use
$\cT^{-}_{\bxi}(k_0)$ to define the $\bxi$-field in the interior of $D,$ and
$\cT^{+}_{\bxi}(k_1)$ to define the $\bxi$-field in the exterior domain
$\Omega.$ Finally, it should be noted that, when $bD$ has several components,
the dependencies between currents and scalar Debye sources stated
in~\eqref{dbsrccrtn2} are local within each boundary component. This is the
case even though the formul{\ae} in~\eqref{diecslnN} imply that the integral
equations we ultimately have to solve intertwine the different boundary
components, albeit via smoothing terms.

While our approach was motivated by that of  M\"uller \cite{muller}, it is worth noting
that there are two important
differences. He used two vector fields $(\bj_1, \bm_1)$ as unknowns combined with 
a relation of the form 
\[
   \bj_{0}=\sqrt{\frac{\epsilon_1}{\epsilon_0}} \bj_{1}\text{ and }
   \bm_0=\sqrt{\frac{\mu_1}{\mu_0}} \bm_{1}
\]
and proved well-posedness for the resulting integral equation. In order to avoid low-frequency
breakdown and achieve graceful uncoupling of the fields, the introduction of the Debye source
representation necessitates a more delicate analysis of the interior and exterior representations,
and the use, in some cases, of a non-trivial clutching map. 

\subsection{The Single Component Case}
We first write the integral equations for the case that $bD$ is connected.  In
this case  $(r_0,q_0,r_1,q_1)\in\cM_{\Gamma,0}^2$ and
$(\bj_H,\bm_H)\in\cH^1(bD)\times\cH^1(bD)$ need to be determined. When
possible, we omit the explicit mention of these arguments below. Our first two
equations are

\begin{equation}\label{diebc3}
  G_0d_{\Gamma}^*[(\bxi^{+,1}_t-\bxi^{-,0}_t)]=G_0d_{\Gamma}^*\bj^{\In}\text{ and }
 G_0d_{\Gamma}^*[(\star_3\bEta^{+,1})_t-(\star_3\bEta^{-,0})_t]= G_0d_{\Gamma}^*\bm^{\In}.
\end{equation}
More explicitly, these equations are
\begin{equation}
\begin{split}\label{tngeqns}
  G_0\star_2d_{\Gamma}[\sqrt{\mu_1}\cT^+_{\bxi}(k_1)-
\sqrt{\mu_0}\cT^-_{\bxi}(k_0)]&=G_0d_{\Gamma}^*\bj^{\In}\\
  G_0\star_2 d_{\Gamma}[\sqrt{\epsilon_1}\cT^+_{\bEta}(k_1)-
\sqrt{\epsilon_0}\cT^-_{\bEta}(k_0)]&=G_0d_{\Gamma}^*\bm^{\In}
\end{split}
\end{equation}
We use the fact that 
\begin{equation}
  G_0d_{\Gamma}^*d_{\Gamma}G_k=\frac{\Id}{4}+O(-1),
\end{equation}
to deduce that these equations are of the form:
\begin{equation}
  \begin{split}\label{ldtneq}
    \frac{-\sqrt{\mu_1}}{4}r_1+
    \frac{\sqrt{\mu_0}}{4}r_0+\lot&=G_0d_{\Gamma}^*\bj^{\In}\\
\frac{\sqrt{\epsilon_1}}{4}q_1- \frac{\sqrt{\epsilon_0}}{4}q_0+\lot&=G_0d_{\Gamma}^*\bm^{\In}.
  \end{split}
\end{equation}
Here and in the sequel, $\lot$ refers to pseudodifferential operators of
negative order (when applied to $(r,q)$).

To derive the other equations we observe that for a 1-form $\balpha$ smooth up
to $\Gamma$ we have that
\begin{equation}\label{fnct1}
  d_{\Gamma}(\balpha_t)=(d\balpha)_t,
\end{equation}
and
\begin{equation}
 d\bxi^{\pm,l}=i\omega\mu_l\bEta^{\pm,l}\text{ and }
 d\star_3\bEta^{\pm,l}=-i\omega\epsilon_l\star_3\bxi^{\pm,l}.
\end{equation}
This means that
\begin{equation}
  (d\bxi^{\pm,l})_t=i\omega\mu_l(\bEta^{\pm,l})_t\text{ and }
 (d\star_3\bEta^{\pm,l})_t=-i\omega\epsilon_l(\star_3\bxi^{\pm,l})_t.
\end{equation}
The right hand sides of these equations are essentially the normal components for
$\bEta^{\pm,l}$ and $\bxi^{\pm,l},$ respectively. On the other hand,~\eqref{fnct1} implies
that
\begin{equation}
  (d\bxi^{+,1})_t- (d\bxi^{-,0})_t=d_{\Gamma}\bj^{\In}\text{ and }
(d\star_3\bEta^{+,1})_t-(d\star_3\bEta^{-,0})_t= d_{\Gamma}\bm^{\In},
\end{equation}
and therefore it is reasonable to assume that there are
functions $f,h$ defined on $\Gamma,$ of size $O(1),$ so that
\begin{equation}
  d_{\Gamma}\bj^{\In}=i\omega fdA\text{ and }d_{\Gamma}\bm^{\In}=i\omega hdA.
\end{equation}
This is automatic if the data $(\bj^{\In},\bm^{\In})$
come from a solution of the \THME[$k_1$]. Note that $(f,h)\in\cM_{\Gamma,0}.$ 

With this understood, the second set of equations becomes:
\begin{equation}
\begin{split}\label{nrmeqns}
  \mu_1\sqrt{\epsilon_1}\cN^+_{\bEta}(k_1)-
\mu_0\sqrt{\epsilon_0}\cN^-_{\bEta}(k_0)&=f,\\
  -\epsilon_1\sqrt{\mu_1}\cN^+_{\bxi}(k_1)+
\epsilon_0\sqrt{\mu_0}\cN^-_{\bxi}(k_0)&=h.
\end{split}
\end{equation}
These equations are of the form:
\begin{equation}\label{ldtneq2}
  \begin{split}
    \left(\frac{\mu_1\sqrt{\epsilon_1}}{2}q_1+\frac{\mu_0\sqrt{\epsilon_0}}{2}q_0\right)+\lot&=f,\\
    -\left(\frac{\sqrt{\mu_1}\epsilon_1}{2}r_1+\frac{\sqrt{\mu_0}\epsilon_0}{2}r_0\right)+\lot&=h.\\
  \end{split}
\end{equation}
These relations, along with~\eqref{ldtneq} show that~\eqref{tngeqns}
and~\eqref{nrmeqns} are a Fredholm system of the second kind for
$(r_0,q_0,r_1,q_1),$ provided that
\begin{equation}
  \mu_0\mu_1\epsilon_0\epsilon_1\neq 0\text{ and }  (\mu_0+\mu_1)(\epsilon_0+\epsilon_1)\neq 0.
\end{equation}

If the genus of $\Gamma$ is $p\neq 0,$ then we also need to determine
$(\bj_H,\bm_H)\in\cH^1(\Gamma)\times\cH^1(\Gamma).$ Equations~\eqref{tngeqns}
and~\eqref{nrmeqns} do not quite suffice as the right hand sides of these
equations explicitly annihilate the harmonic projections of $\bj^{\In}$ and
$\bm^{\In}.$ We have two distinct choices as to how we should augment these
equations to capture the projection into the harmonic vector fields. On the one
hand we could use a basis  $\Psi=(\psi_1,\dots,\psi_{2g})$  for
$\cH^1(\Gamma),$ the harmonic one forms on $\Gamma,$ and then augment the
equations~\eqref{tngeqns} and~\eqref{nrmeqns} with the $4g$ equations:
\begin{equation}\label{auxeqn}
\begin{split}
 \langle [\bxi^{+,1}_t-\bxi^{-,0}_t],\Psi\rangle&=\langle\bj^{\In},\Psi\rangle\\
\langle [(\star_3\bEta^{+,1})_t-(\star_3\bEta^{-,0})_t)],\Psi\rangle&
=\langle\bm^{\In},\Psi\rangle.
\end{split}
\end{equation}

We can also follow a more traditional, and geometric approach. Let
$\{(A_j,B_j):\: j=1,\dots,g\}$ be a basis for $H_1(\Gamma),$ normalized as
above: the $A$-cycles span $[H^1(D)]^{\bot},$ and  the $B$-cycles span
$[H^1(\Omega)]^{\bot}.$ We can then augment the equations~\eqref{tngeqns}
and~\eqref{nrmeqns} with conditions on the circulations:
\begin{equation}\label{auxeqn2}
  \begin{split}
    \int\limits_{A_j}[\bxi^{+,1}_t-\bxi^{-,0}_t]&=\int\limits_{A_j}\bj^{\In}\\
    \int\limits_{B_j}[\bxi^{+,1}_t-\bxi^{-,0}_t]&=\int\limits_{B_j}\bj^{\In}\\
  \int\limits_{A_j}[(\star_3\bEta^{+,1})_t-(\star_3\bEta^{-,0})_t)]&=
\int\limits_{A_j}\bm^{\In}\\
    \int\limits_{B_j}[(\star_3\bEta^{+,1})_t-(\star_3\bEta^{-,0})_t)]&=
\int\limits_{B_j}\bm^{\In}.
  \end{split}
\end{equation}
If we have Debye source data in the null-space of~\eqref{tngeqns}
and~\eqref{nrmeqns}, then the tangential fields $[\bxi^{+,1}_t-\bxi^{-,0}_t]$
and $[(\star_3\bEta^{+,1})_t-(\star_3\bEta^{-,0})_t)]$ are harmonic 1-forms. As
$H^1_{\dR}(bD)$ and $H_1(bD)$ are dual vector spaces, via this pairing, such
data lies in the null-space of~\eqref{auxeqn} if and only if it is in the
null-space of~\eqref{auxeqn2}.

\subsection{The Multiple Component Case}
In this subsection, we assume that $bD=bD_1\cup\dots\cup bD_N.$ Taking account
of the differences between~\eqref{diepr1comp} and~\eqref{diecslnN}, we see that
the equations in this case differ somewhat from the equations in the case
where $bD$ is connected. The differences are all in the form of
smoothing operators, and therefore the computations of the leading order terms
apply {\it mutatis mutandis}. For clarity, we augment the notation for the
tangential and normal restrictions to specify a particular component of the boundary,
e.g.  it should be understood that $\cT^+_{\bxi}(k_1, bD_l)$ is a tangent field
on $bD_l$ that depends on $(r_{0l},r_{1l},q_{0l},q_{1l},\bj_{Hl},\bm_{Hl}).$
For $m\neq l$, we
use $\cF^{+,l}_{\bxi,t}(\epsilon_1,\mu_1,\omega,\Gamma_m,\cD_{m,1}),$
$\cF^{+,l}_{\bxi,n}(\epsilon_1,\mu_1,\omega,\Gamma_m,\cD_{m,1}),$  to denote
the tangential and normal components, resp., of the $\bxi$-field along
$bD_l$ of the fields defined by the sources $(\bj_{1m},\bm_{1m})$ on $bD_m.$
The notation  $\cF^{+,l}_{\bEta,t},  \cF^{+,l}_{\bEta,n}$ has the analogous
meaning for the $\bEta$-field.

With these notational conventions, we can now give the equations for the
multiple component case. We let $(\bj^{\In}_l,\bm^{\In}_l)$ denote the incoming
fields along $bD_l.$ As in the previous case we assume that these fields
arise as restrictions of a single solution $(\bxi^{\In},\bEta^{\In})$ of
\THME[$k_1$], and therefore, there are functions $(f_l,h_l)$ of size $O(1)$ so
that, along $bD_l$ we have:
\begin{equation}\label{nrmcmpmc}
  d_{\Gamma_l}\bj^{\In}_l=i\omega f_ldA\text{ and }d_{\Gamma_l}\bm^{\In}_l=i\omega h_ldA.
\end{equation}

Now fix $1\leq l\leq N.$ Along $bD_l$ the tangential equations for the solution of the
dielectric problem in terms of the Debye sources reads:

\begin{equation}
\begin{split}\label{tngeqnsmc}
  G_0\star_2d_{\Gamma}[\sqrt{\mu_1}\cT^+_{\bxi}(k_1,bD_l)-
\sqrt{\mu_{0l}}\cT^-_{\bxi}(k_{0l},bD_l)]
+& 
\\G_0d_{\Gamma}^*\Big[\sum_{m\neq l}
\cF^{+,l}_{\bxi,t}(\epsilon_1,\mu_1,\omega,&\Gamma_m,\cD_{m,1})\Big]\\
&=G_0d_{\Gamma}^*\bj^{\In}_l\\
  G_0\star_2d_{\Gamma}[\sqrt{\epsilon_1}\cT^+_{\bEta}(k_1)-
\sqrt{\epsilon_{0l}}\cT^-_{\bEta}(k_{0l})]+&\\
G_0d_{\Gamma}^*\Big[\sum_{m\neq l}
\cF^{+,l}_{\bEta,t}(\epsilon_1,\mu_1,\omega,&\Gamma_m,\cD_{m,1})\Big]\\
&=G_0d_{\Gamma}^*\bm^{\In}_l.
\end{split}
\end{equation}
Away from $\Gamma_l$ we have the relations
\begin{equation}
\begin{split}
    d\cF^{+}_{\bxi}(\epsilon_1,\mu_1,\Gamma_l,\cD_{l,1})&=
i\omega\mu_1\cF^{+}_{\bEta}(\epsilon_1,\mu_1,\Gamma_l,\cD_{l,1})\\
d\star_3\cF^{+}_{\bEta}(\epsilon_1,\mu_1,\Gamma_l,\cD_{l,1})&=
-i\omega\epsilon_1\star_3\cF^{+}_{\bxi}(\epsilon_1,\mu_1,\Gamma_l,\cD_{l,1}).
\end{split}
\end{equation}
These relations and~\eqref{nrmcmpmc} show that the  normal equations are
\begin{equation}
\begin{split}\label{nrmeqnsmc}
  \mu_1\sqrt{\epsilon_1}\cN^+_{\bEta}(k_1,bD_l)-
\mu_{0l}\sqrt{\epsilon_{0l}}\cN^-_{\bEta}(k_{0l},bD_l)&+\\
\mu_1\sum_{m\neq l}
\cF^{+,l}_{\bEta,n}(\epsilon_1,\mu_1,\omega,&\Gamma_m,\cD_{m,1})\\
&=f_l=\left(\frac{\star_2d_{\Gamma}\bj^{\In}}{i\omega}\text{ if }\omega\neq 0\right)\\
  -\epsilon_1\sqrt{\mu_1}\cN^+_{\bxi}(k_1,bD_l)+
\epsilon_{0l}\sqrt{\mu_{0l}}\cN^-_{\bxi}(k_{0l},bD_l)+\\
-\epsilon_1\sum_{m\neq l}
\cF^{+,l}_{\bxi,n}(\epsilon_1,\mu_1,\omega,&\Gamma_m,\cD_{m,1})\\
&=h_l=\left(\frac{\star_2d_{\Gamma}\bm^{\In}}{i\omega}\text{ if }\omega\neq 0\right).
\end{split}
\end{equation}
As noted above, the new terms in these equations are smoothing operators. Thus, with
the obvious changes in notation, the equations for the leading order
parts,~\eqref{ldtneq} and~\eqref{ldtneq2} apply equally well in the
multi-component case to show that these are again Fredholm equations of the second
kind for $\{(r_{0l},r_{1l},q_{0l},q_{1l}):\: l=1,\dots,N\}.$ To complete the
system we append either the $4p$ equations in~\eqref{auxeqn}, or those
in~\eqref{auxeqn2}, where $p$ is  the total genus of
$\Gamma.$

\subsection{Low Frequency Behavior}
Suppose that we are given Debye data in the null-space of the system defined
by~\eqref{tngeqnsmc},~\eqref{nrmeqnsmc}, and~\eqref{auxeqn}. If $\omega\neq 0,$
then it is apparent from the Hodge theorem that $(\bxi^{\pm},\bEta^{\pm})$
satisfy~\eqref{diebc} with $\bj^{\In}=\bm^{\In}=0.$ Hence Theorem~\ref{thm02}
shows that the Debye data is in fact zero. If we are given Debye data, so that,
at $\omega=0,$ the limiting homogeneous boundary conditions~\eqref{diebc0hom}
hold, then, provided that $\cU$ is an admissible clutching map,
Theorem~\ref{thm04} implies that again, the solution $(\bxi^{\pm},\bEta^{\pm})$
is identically zero, as is the Debye data. That the mean values of the normal
components $(i_{\bn}\bxi^+,i_{\bn}\star_3\bEta)$ vanish on each component of
$bD$ follows immediately, as this is true for all solutions defined by Debye
data.

Thus, if we make the assumption that the incoming field satisfies the
conditions in~\eqref{nrmcmpmc}, and $\cU$ is an admissible clutching map, then
this system of equations displays no low frequency breakdown. The uniqueness
results for the Debye representations, Theorems~\ref{thm02} and~\ref{thm04},
show that, using the ansatz in~\eqref{diecslnN}, the only Debye data which
leads to a solution to the homogeneous dielectric problem is
zero. As~\eqref{tngeqnsmc},~\eqref{nrmeqnsmc}, and~\eqref{auxeqn} is a system
of Fredholm equations of second kind, this proves the solvability.
\begin{theorem}
  Let $D$ be a union of smooth bounded regions in $\bbR^3$ with connected
  complement $\Omega.$  Suppose further that $\cU$ is an admissible clutching map.
\begin{enumerate}
\item If $\omega\neq 0$ and we assume that 
$$\{\omega\mu_1,\omega\epsilon_1,\omega\mu_{0l},\omega\epsilon_{0l}:\:
l=1,\dots, N\}$$ have non-negative imaginary parts, and the ratios
$\Re(\mu_{0l}/\epsilon_{0l})>0,$ then the system of
equations~\eqref{tngeqnsmc},~\eqref{nrmeqnsmc}, and~\eqref{auxeqn} for
$$\{(r_{0l},r_{1l},q_{0l},q_{1l},\bj_{Hl},\bm_{Hl}):\: l=1,\dots,N\},$$ is
solvable for any right hand side $(\bj^{\In},\bm^{\In}).$
\item If $\omega=0,$ then, assuming that, for some $\phi,$
  $\{e^{i\phi}\epsilon_1, e^{i\phi}\epsilon_{0l}:\: l=1,\dots,N\}$ all have
  positive real part, the $\bxi$-equations
  in~\eqref{tngeqnsmc},~\eqref{nrmeqnsmc}, and~\eqref{auxeqn} for the Debye
  sources $\{(r_{0l},r_{1l},\bm_{Hl}):\: l=1,\dots,N\},$ are solvable for
  arbitrary admissible data $d_{\Gamma}^*\bj^{\In}$, $\{f_1,\dots,f_N\}$ and
  $\langle \bj^{\In},\Psi\rangle.$ Here data is admissible if
  $d_{\Gamma}\bj^{\In}=0$ and
\begin{equation}
  \int\limits_{\Gamma_l}f_ldA=0.
\end{equation}
\item The analogous result holds for the $\bEta$-equations assuming that there
  is a $\theta$ so that  $\{e^{i\theta}\mu_1, e^{i\theta}\mu_{0l}:\:
  l=1,\dots,N\}$ all have positive real parts.
\end{enumerate} 
\end{theorem}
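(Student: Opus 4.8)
The plan is to show, in each of the three cases, that the displayed linear system is Fredholm of index zero with trivial null space, and then to invoke the Fredholm alternative. For the \textbf{Fredholm structure}: the cross-component terms $\cF^{+,l}_{\bxi,t},\cF^{+,l}_{\bEta,t},\cF^{+,l}_{\bxi,n},\cF^{+,l}_{\bEta,n}$ appearing in~\eqref{tngeqnsmc} and~\eqref{nrmeqnsmc} are smoothing, so the leading symbol is computed exactly as in the connected case. Combining~\eqref{ldtneq} and~\eqref{ldtneq2}, the principal part acting on $(r_{0l},r_{1l},q_{0l},q_{1l})$ is, modulo operators of negative order, block diagonal, the $(r_{0l},r_{1l})$ block having determinant a nonzero multiple of $\sqrt{\mu_{0l}\mu_1}\,(\epsilon_{0l}+\epsilon_1)$ and the $(q_{0l},q_{1l})$ block a nonzero multiple of $\sqrt{\epsilon_{0l}\epsilon_1}\,(\mu_{0l}+\mu_1)$. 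Under the hypotheses of (1)--(3) — using that the standing assumptions force $\Re\epsilon_{\cdot},\Re\mu_{\cdot}>0$ in case (1), and the rotation by $e^{i\phi}$, resp.\ $e^{i\theta}$, in (2),(3) — these are nonzero, so composing with the inverse of this invertible $0$th-order block turns the system into $\Id$ plus a compact operator on the space of admissible scalar Debye data together with harmonic $1$-forms on each $bD_l$; the finitely many rows and columns attached by~\eqref{auxeqn} preserve this, giving index $0$. Hence solvability for every admissible right-hand side is equivalent to injectivity.

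\textbf{Part (1).} Let $\{\DS_l\}$ lie in the null space of the homogeneous system ($\bj^{\In}=\bm^{\In}=0$). A $1$-form on $\Gamma_l$ is recovered from $d_{\Gamma_l}^*$ of it, $\star_2 d_{\Gamma_l}$ of it, and its harmonic projection: vanishing of~\eqref{tngeqnsmc} gives $d_{\Gamma_l}^*$ of the tangential jumps $\bxi^{+,1}_t-\bxi^{-,0}_t$ and $(\star_3\bEta^{+,1})_t-(\star_3\bEta^{-,0})_t$; the Maxwell relations turn the vanishing of~\eqref{nrmeqnsmc} into the statement that $\star_2 d_{\Gamma_l}$ of these jumps vanish; and~\eqref{auxeqn} kills their harmonic projections. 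So both tangential jumps vanish identically, i.e.\ the fields~\eqref{diecslnN} satisfy~\eqref{diebc} with zero data, and Theorem~\ref{thm02} forces $\{\DS_l\}=0$. Together with the Fredholm structure this proves (1).

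\textbf{Parts (2) and (3).} At $\omega=0$ the system decouples: the $\bxi$-equations involve only $\{(r_{0l},r_{1l},\bm_{Hl})\}$ and the $\bEta$-equations only $\{(q_{0l},q_{1l},\bj_{Hl})\}$. For (2), after the rotation making all $e^{i\phi}\epsilon_{0l},e^{i\phi}\epsilon_1$ have positive real part, the $(r_{0l},r_{1l})$ block is invertible (its determinant being a multiple of $\sqrt{\mu_{0l}\mu_1}(\epsilon_{0l}+\epsilon_1)\neq 0$), so the $\bxi$-system is again Fredholm of index $0$. Given data in its homogeneous null space, $\bxi^+_t-\bxi^-_t=0$ directly from~\eqref{tngeqnsmc}, and~\eqref{nrmeqnsmc} with $h_l=0$ gives $\epsilon_{0l}\bxi^-_n-\epsilon_1\bxi^+_n=0$, so~\eqref{diebc0hom} holds on every $bD_l$; the mean-zero condition on $\bxi^{\pm}_n$ needed by Theorem~\ref{thm04} is automatic, since normal components of Debye-defined fields always have vanishing mean on each boundary component. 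As $\cU$ is admissible (or $bD$ is $H$-generic with $\cU=\star_2$), Theorem~\ref{thm04} (via the energy identity of Theorem~\ref{thm03}) forces the data to vanish, whence injectivity and solvability; the stated compatibility conditions $d_{\Gamma}\bj^{\In}=0$ and $\int_{\Gamma_l}f_l\,dA=0$ are exactly those identifying the range, namely that $\bj^{\In}$ defines the $f_l$ through~\eqref{nrmcmpmc} and that the $f_l$ lie in $\cM_{\Gamma_l,0}$. Part (3) is the same with $\epsilon\leftrightarrow\mu$ and $\bxi\leftrightarrow\star_3\bEta$, using the rotation $e^{i\theta}$.

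\textbf{Main obstacle.} The Fredholm bookkeeping is routine; the delicate point is ensuring that null-space Debye data genuinely yields fields meeting the \emph{full} homogeneous interface conditions — that the \emph{projected} equations~\eqref{tngeqnsmc},~\eqref{nrmeqnsmc},~\eqref{auxeqn} (or their $\omega=0$ restrictions) recover the entire tangential jump, and, at $\omega=0$, that the harmonic augmentation couples to $\{\bm_{Hl}\}$, resp.\ $\{\bj_{Hl}\}$, precisely as Theorem~\ref{thm04} requires — after which the uniqueness Theorems~\ref{thm02} and~\ref{thm04} do the rest.
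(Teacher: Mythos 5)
Your proposal is correct and follows essentially the same route as the paper: establish that the system is Fredholm of the second kind (index zero) from the leading-order blocks in~\eqref{ldtneq} and~\eqref{ldtneq2} with the cross-component terms being smoothing, recover the full tangential jumps from the projected equations via the Hodge decomposition, and then invoke the uniqueness Theorems~\ref{thm02} (for $\omega\neq 0$) and~\ref{thm04} (for $\omega=0$, where the system decouples and the mean-zero normal condition is automatic) to get injectivity and hence solvability. The paper compresses the Hodge-recovery step to a single sentence; you have simply spelled out the same argument in more detail.
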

\begin{remark} The positivity conditions in this theorem are true if
  $\Im\omega\geq 0$ and the physical constants satisfy the conditions given
  in~\eqref{eqn4.0}.  As noted above, for an H-generic boundary we can let
  $\cU=\star_2.$ 
H-genericity is an open and dense
  condition on surfaces in $\bbR^3.$ 
\end{remark}

\section{Low frequency behavior of the perfect conductor problem}\label{lfbpc}
The operators defining the Hodge star
($\star_2$) of the tangential components of $(\bxi^{\pm},\star_3\bEta^{\pm})$ are given by
\begin{equation}
\begin{split}
  \cT^{\pm}_{\bxi}(k)
&=
\frac{\pm \bm}{2}-
 K_1r + ik K_{2,t}\,\bj -K_4\bm\\
 \cT^{\pm}_{\bEta}(k)
&=
\frac{\mp \bj}{2}-
 K_1q + ik K_{2,t}\,\bm +K_4\bj
\label{eqn888.1}
\end{split}
\end{equation}
The equations for the normal components of the $\bEta$-fields are
\begin{equation}
\cN^{\pm}_{\bEta}(k)
=\left(\frac{\pm\Id}{2}-K_0\right)q+
K_3 \,\bj + ik\bn_x\cdot G_k\,\bm
\label{eqn74.1.1}
\end{equation}

For the perfect conductor case, we introduced a single pair $(r_l,q_l)$ of scalar
Debye sources for each component, $\Gamma_l,$ of $bD$ and a single harmonic 1-form
$\bj_{Hl}.$ The currents $(\bj_l,\bm_l)$ are given by
\begin{equation}
  \bj_l=ik(d_{\Gamma_l}R_0r_l-\star_2d_{\Gamma_l}R_0q_l)+\bj_{Hl}\text{ and }
\bm_l=\star_2\bj_l.
\end{equation}
Using
$(\cT^{\pm}_{\bxi}(k)(r,q,\bj_H),\cN^{\pm}_{\bEta}(k)(r,q,\bj_H))$ to denote
these operators restricted to $\cM(\Gamma)\times\cH^1(\Gamma),$ with these
relations, the hybrid system of integral operators is then defined to be
\begin{equation}\label{hybrid0}
  \cQ^{\pm}(k)\left(\begin{matrix} r\\q\\ \bj_H\end{matrix}\right)=
\left(\begin{matrix}
   -G_0\star_2d_{\Gamma}\cT^{\pm}_{\bxi}(k)\\\cN_{\bEta}^{\pm}(k)
\end{matrix}\right) \left(\begin{matrix} r\\q\\\bj_H\end{matrix}\right).
\end{equation}
The range of $\star_2d_{\Gamma}\cT^{\pm}_{\bxi}(k)$ is contained in the space of
functions on $\Gamma$ with mean zero on every component.

As noted in~\cite{EpGr2}, for $\omega\neq 0,$ the nullspace of this system
consists of data such that $\bxi^{\pm}_{t}$ is a harmonic 1-form. We can
therefore append relations of the type given in~\eqref{auxeqn}
or~\eqref{auxeqn2} to get an invertible system. A system of the type
in~\eqref{auxeqn}, however, has serious conditioning problems as $\omega\to 0.$

If we suppose that the incoming data $(\bxi^{\In},\bEta^{\In})$ is a solution
of the \THME[$k$], then an auxiliary equation similar to that
in~\eqref{auxeqn2} can be employed, which does not suffer from conditioning
problems at $k=0.$ We let $\{A_j,B_j:\:j=1,\dots,g\}$ be a basis for
$H_1(\Gamma),$ normalized, as above, so that the $\{A_j\}$ are a basis for
$[H^1_{\dR}(D)]^{\bot}$ and the $\{B_j\},$ a basis for
$[H^1_{\dR}(\Omega)]^{\bot}.$ We augment
\begin{equation}\label{hybrid0.1}
  \cQ^{+}(k)\left(\begin{matrix} r\\q\\ \bj_H\end{matrix}\right)=
\left(\begin{matrix}
   G_0d_{\Gamma}^*\bxi^{\In}_t\\\bEta^{\In}_n
\end{matrix}\right)
\end{equation}
with the equations
\begin{equation}\label{auxeqn2.1}
      \int\limits_{A_j}\star_2\cT^{+}_{\bxi}(k)=\int\limits_{A_j}\bxi^{\In}_t
\end{equation}
\begin{equation}\label{auxeqn2.1.1}
    \frac{1}{k}\int\limits_{B_j}\star_2\cT^{+}_{\bxi}(k)=\frac{1}{k}\int\limits_{B_j}\bxi^{\In}_t.
    \end{equation}
\begin{theorem}
For $\omega\neq 0,$ the null-space of the full
system~\eqref{hybrid0.1},~\eqref{auxeqn2.1}, and~\eqref{auxeqn2.1.1} is
trivial. 
\end{theorem}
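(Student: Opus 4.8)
The plan is to show that a null-space element of the augmented system \eqref{hybrid0.1}, \eqref{auxeqn2.1}, \eqref{auxeqn2.1.1} (with zero data) forces all the Debye data $(r,q,\bj_H)$ to vanish, in two stages: first use the structure of $\cQ^+(k)$ together with the circulation equations to kill the exterior field, then run M\"uller's energy argument of Theorem~\ref{thm01} on the formal interior field. Concretely, from $\cQ^+(k)(r,q,\bj_H)=0$ I would read off, using $\cT^+_{\bxi}(k)=\star_2\bxi^+_t/\sqrt{\mu}$, that $G_0 d_\Gamma^*\bxi^+_t=0$, hence $\bxi^+_t$ is co-closed (because $d_\Gamma^*\bxi^+_t$ is mean-zero on each component and $G_0$ is injective there), while the second row, $\cN^+_{\bEta}(k)=i_{\bn}\star_3\bEta^+/\sqrt{\epsilon}=0$, says $\bn\cdot\bH^+\equiv 0$ on $bD$; since $d_\Gamma\bxi^+_t=(d\bxi^+)_t=i\omega\mu\,\bEta^+\restrictedto_{bD}$ and the pullback of the two-form $\bEta^+$ to $bD$ is $(\bn\cdot\bH^+)\,dA$, this forces $d_\Gamma\bxi^+_t=0$, so $\bxi^+_t$ is a harmonic $1$-form on $bD$ (as already noted following~\cite{EpGr2}). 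Then, since $\star_2\cT^+_{\bxi}(k)=-\bxi^+_t/\sqrt{\mu}$, the homogeneous versions of \eqref{auxeqn2.1} and \eqref{auxeqn2.1.1} (the factor $1/k$ in the latter being immaterial) say exactly that every $A_j$- and $B_j$-period of $\bxi^+_t$ vanishes; as $\{[A_j],[B_j]\}$ spans $H_1(bD)$, a harmonic form with all periods zero is exact and hence zero. Thus $(\bxi^+,\bEta^+)$ is an outgoing solution of the time harmonic Maxwell equations in $\Omega$ satisfying the perfect-conductor conditions $\bxi^+_t=0$ and $\bn\cdot\bH^+=0$, so it vanishes identically in $\Omega$ by the classical exterior uniqueness theorem (cf.~\cite{EpGr2}).

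It then remains to recover the data. I would introduce the formal interior field $(\bxi^-,\bEta^-)=\cF^-(\epsilon,\mu,\omega,bD,\bj,\bm)$ built from the same currents, with $\bm=\star_2\bj$ and $\bj=ik\,d_\Gamma R_0 r - ik\,\star_2 d_\Gamma R_0 q + \bj_H$. Since all boundary data of the $+$ field vanish, the jump relations \eqref{jumpconds} give $\bxi^-_t=\sqrt{\mu}\star_2\bm=-\sqrt{\mu}\,\bj$ and $i_{\bn}\bEta^-=-\sqrt{\epsilon}\,\bj$ along $bD$. Applying Stokes' theorem to $\overline{\bxi^-}\wedge\star_3\bEta^-$ over $D$ and substituting the Maxwell equations, exactly as in the proof of Theorem~\ref{thm01}, and using $\overline{\bj}\wedge\star_2\bj=|\bj|^2\,dA$, one obtains
\[
\int\limits_D\bigl[i\omega\epsilon\,\overline{\bxi^-}\wedge\star_3\bxi^- - i\overline{\omega\mu}\,\overline{\bEta^-}\wedge\star_3\bEta^-\bigr]=\sqrt{\overline{\mu}\,\epsilon}\oint\limits_{bD}|\bj|^2\,dA .
\]
The real part of the left-hand side is $\le 0$, since $\Re(i\omega\epsilon)=-\Im(\omega\epsilon)\le 0$ and $-\Re(i\overline{\omega\mu})=-\Im(\omega\mu)\le 0$, while the real part of the right-hand side is $\ge 0$ because $\Re(\overline{\mu}\epsilon)>0$ for the physical constants of \eqref{eqn4.0} with $\omega\in\bbR^+$, so $\Re\sqrt{\overline{\mu}\,\epsilon}>0$. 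Hence $\bj\equiv 0$ and therefore $\bm=\star_2\bj\equiv 0$. Finally, in $\bj=ik\,d_\Gamma R_0 r - ik\,\star_2 d_\Gamma R_0 q + \bj_H=0$ the three summands lie in the mutually orthogonal exact, co-exact, and harmonic parts of the Hodge decomposition of $1$-forms on $bD$, so each vanishes; since $R_0$ inverts $\Delta_{\Gamma,0}$ on mean-zero functions this forces $r=q=0$, and $\bj_H=0$. For $bD$ with several components one applies the same reasoning componentwise, summing the interior Stokes identities over the $D_l$ and the period conditions over the cycles of all components.

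The step I expect to be the main obstacle is the passage from ``$\bxi^+_t$ harmonic'' to ``$\bxi^+_t=0$'': the null-space of $\cQ^+(k)$ alone does not kill the tangential field, so one genuinely needs the circulation equations \eqref{auxeqn2.1}--\eqref{auxeqn2.1.1} to control all its periods, and one must then be careful that M\"uller's energy identity still closes when run on the \emph{formal} interior field produced by $\cF^-$ rather than on a physical field. What makes it close is the favourable sign of the boundary term $\sqrt{\overline{\mu}\,\epsilon}\oint_{bD}|\bj|^2\,dA$ relative to the dissipative bulk term, and this in turn depends on the coupling $\bm=\star_2\bj$ built into the perfect-conductor ansatz --- the analogue here of M\"uller's combination.
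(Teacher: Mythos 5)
Your proof is correct and follows essentially the same route as the paper's: the paper likewise argues that membership in the null-space of $\cQ^{+}(k)$ forces $\bxi^+_t$ to be a harmonic $1$-form, that the vanishing circulations over the $A$- and $B$-cycles make it the harmonic representative of the trivial class in $H^1_{\dR}(\Gamma)$, hence zero. The only difference is that the paper then closes in one line by citing Theorem 7.1 of~\cite{EpGr2} for the implication ``$\bxi^+_t\equiv 0$ and $\omega\neq 0$ imply $(r,q,\bj_H)=0$,'' whereas you re-derive that implication in full via exterior uniqueness, the jump relations, M\"uller's energy identity on the formal interior field, and the orthogonality of the Hodge decomposition.
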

\begin{proof} If $(r,q,\bj_H)$ is in the null-space of~\eqref{hybrid0.1},
  then the tangential 1-form $\bxi^+_t\in\cH^1(\Gamma).$ If the integrals
  in~\eqref{auxeqn2.1} and~\eqref{auxeqn2.1.1} vanish, then $\bxi^+_{t}$ is the
  harmonic representative of the trivial cohomology class in
  $H^1_{\dR}(\Gamma),$ and is therefore zero. Hence $\bxi^+_t\equiv 0;$ as
  $\omega\neq 0,$ Theorem 7.1 in~\cite{EpGr2} shows that the data $(r,q,\bj_H)$
  must also vanish.
\end{proof}

At $\omega=0$ the current $\bj=\bj_H,$ and we see that
\begin{equation}\label{eqn888.1.1}
\begin{split}
\bxi^{\pm}_t
&= \frac{\pm \bj_H}{2}-
 d_{\Gamma}G_0r +\star_2K_4\star_2\bj_H\\
[\star_3\bEta^{\pm}]_t
&= \frac{\pm \star_2\bj_H}{2}-
 d_{\Gamma}G_0q -\star_2 K_4\bj_H.
\end{split}
\end{equation}
Since the cycles $\{B_j\}$ span $[H^1_{\dR}(\Omega)]^{\bot},$ there are smooth
surfaces $\{S_j\}$ contained in $\Omega$ so that $b S_j =B_j.$ Thus, using
Stokes theorem and the equations $d\bxi^+=ik\bEta^+,$
$d\bxi^{\In}=ik\bEta^{\In},$ we can rewrite the conditions in~\eqref{auxeqn2.1.1} as
\begin{equation}\label{auxeqn2.2}
  \int\limits_{S_j}\cN^{+}_{\bEta}(k)(r,q,\bj_{H})=\int\limits_{S_j}\bEta^{\In}.
\end{equation}
The advantage of this formulation is that it has an obvious smooth limit as
$k\to 0.$ In fact, replacing~\eqref{auxeqn2.1.1} with these equivalent
conditions allows us to verify that the augmented system of
equations~\eqref{hybrid0.1},~\eqref{auxeqn2.1}, and~\eqref{auxeqn2.2} does not
suffer from low frequency breakdown.
\begin{theorem} If $(r,q,\bj_H)\in\cM_{\Gamma}\times\cH^1(\Gamma)$ defines
  solutions $(\bxi^{\pm},\bEta^{\pm})$ to \THME[$0$] such that
  \begin{equation}
 \cQ^{+}(0)\left(\begin{matrix} r\\q\\
     \bj_H\end{matrix}\right)=\left(\begin{matrix} 0\\0\end{matrix}\right),   
  \end{equation}
  and the integrals on the left hand sides in~\eqref{auxeqn2.1}
  and~\eqref{auxeqn2.2} vanish, then $r=q=\bj_H=0.$
\end{theorem}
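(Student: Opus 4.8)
The plan is to follow the template of the preceding theorem and of Theorem~\ref{thm04}: first show that the exterior fields $(\bxi^+,\bEta^+)$ vanish throughout $\Omega$, and then read off from the vanishing boundary traces, together with the jump relations, that the Debye data $(r,q,\bj_H)$ is zero. The new feature relative to the $\omega\neq 0$ case is that at $\omega=0$ the $\bxi$- and $\bEta$-equations decouple, so one must run two parallel arguments — the first driven by the top row of $\cQ^+(0)$ together with the $A$-cycle conditions~\eqref{auxeqn2.1}, the second by the bottom row together with the surface conditions~\eqref{auxeqn2.2} — and then glue them through the common harmonic current $\bj_H$. To begin, the first component of $\cQ^+(0)=0$ reads $G_0\star_2 d_\Gamma\cT^+_\bxi(0)=0$; since the range of $\star_2 d_\Gamma\cT^+_\bxi(0)$ lies in the functions with mean zero on each component of $\Gamma$, on which the single-layer operator $G_0$ is invertible, this forces $d_\Gamma\star_2\bxi^+_t=0$, i.e.\ $d^*_\Gamma\bxi^+_t=0$; as $d\bxi^+=ik\bEta^+=0$ in $\Omega$ at $k=0$, the trace $\bxi^+_t$ is also closed, hence $\bxi^+_t\in\cH^1(\Gamma)$. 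The second component gives $\cN^+_\bEta(0)=0$, i.e.\ $i_{\bn}\star_3\bEta^+=0$ on $\Gamma$, equivalently the pullback of the $2$-form $\bEta^+$ to $\Gamma$ vanishes; and at $\omega=0$ one has $d\star_3\bEta^+=0$ and, from the assumed divergence equation, $d^*\bEta^+=0$, so $\bxi^+$ and $\bEta^+$ are harmonic forms in $\Omega$.

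\textbf{The exterior fields vanish.} Since $\bxi^+$ is closed in $\Omega$, its boundary class lies in the image of $H^1_{\dR}(\Omega)\hookrightarrow H^1_{\dR}(\Gamma)$, which the $B$-cycles annihilate, so $\int_{B_j}\bxi^+_t=0$; with the vanishing of $\int_{A_j}\star_2\cT^+_\bxi(0)=\pm\int_{A_j}\bxi^+_t$ supplied by~\eqref{auxeqn2.1}, every period of the harmonic $1$-form $\bxi^+_t$ vanishes, so $\bxi^+_t\equiv 0$. By the same injectivity $\bxi^+=d\phi^+$ with $\phi^+$ harmonic in $\Omega$, locally constant on $\Gamma$, and decaying; integrating $|d\phi^+|^2$ by parts and using that $i_{\bn}\bxi^+$ has mean zero on each component of $bD$ (automatic for fields defined by Debye data) yields $\bxi^+\equiv 0$ in $\Omega$. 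For $\bEta^+$, the conditions~\eqref{auxeqn2.2}, which are the $k=0$ form of~\eqref{auxeqn2.1.1}, assert that the relative periods $\int_{S_j}\bEta^+$ vanish, where $\partial S_j=B_j$; since $\bEta^+$ is a closed $2$-form with vanishing pullback to $\Gamma$ and vanishing flux through the spheres surrounding the components of $D$, it follows that $[\bEta^+]=0$ in the appropriate relative cohomology, hence $\bEta^+=d\gamma$ with $\gamma_t=0$ on $\Gamma$ and $\gamma$ decaying, and integration by parts (using $\gamma_t=0$ and decay at infinity) gives $\int_\Omega\bEta^+\wedge\star_3\bEta^+=0$, so $\bEta^+\equiv 0$ in $\Omega$.

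\textbf{The data vanishes.} At $\omega=0$ one has $\bj=\bj_H$ and $\bm=\star_2\bj_H$, and the jump relations give $\bxi^+_t-\bxi^-_t=\bj_H$ and $[\star_3\bEta^+]_t-[\star_3\bEta^-]_t=\star_2\bj_H$; since $\bxi^+_t=0$ and $[\star_3\bEta^+]_t=0$, we get $\bxi^-_t=-\bj_H$ and $[\star_3\bEta^-]_t=-\star_2\bj_H$, and because $\bxi^-$ and $\star_3\bEta^-$ are closed in $D$ their traces restrict from $H^1_{\dR}(D)$, so $\bj_H\in\cH^1_D(bD)$ and $\star_2\bj_H\in\cH^1_D(bD)$. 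As $\cH^1_D(bD)$ and $\star_2\cH^1_D(bD)$ are complementary Lagrangian subspaces of $\cH^1(bD)$, this forces $\bj_H=0$ (for $bD$ with several components one argues one component at a time, using the flipped-field construction as in the proof of Theorem~\ref{thm02}). Finally, with $\bj_H=0$ all currents vanish, so $\bxi^+=-\sqrt{\mu_1}\,d\phi^+$ with $\phi^+=\int_\Gamma g_0(x,y)r(y)\,dA(y)$; $\bxi^+\equiv 0$ forces $\phi^+$ constant, hence $\equiv 0$ by decay, whence $r=0$, and the identical argument applied to $\Psi$ with density $q$ gives $q=0$.

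\textbf{Main obstacle.} The technical heart of the argument is the Hodge-theoretic bookkeeping on the unbounded domain $\Omega$ in the second step: one must identify the surface integrals of~\eqref{auxeqn2.2} with a complete set of relative periods and verify that, together with the automatic triviality of the magnetic flux through the enclosing spheres and the decay at infinity, they force $[\bEta^+]=0$; and, underlying the third step, that $\cH^1_D(bD)$ and $\star_2\cH^1_D(bD)$ are transverse for every embedded $bD$. It is exactly this last transversality that lets the perfect-conductor representation use $\cU=\star_2$ (that is, $\bm=\star_2\bj$) with no $H$-genericity hypothesis, which is why this hybrid system is free of low-frequency breakdown.
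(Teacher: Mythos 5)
Your proof is correct and follows essentially the same route as the paper's: you show $(\bxi^+,\bEta^+)\equiv 0$ using the harmonicity of $\bxi^+_t$, the $A$-cycle and spanning-surface conditions, and the classification of exterior harmonic fields; then use the jump relations to place $\bj_H$ and $\star_2\bj_H$ in the Lagrangian subspace $\cH^1_{D}(bD)$, forcing $\bj_H=0$; and finally recover $r=q=0$. The only cosmetic differences are in the last step, where the paper invokes the invertibility of $-\tfrac{\Id}{2}+K_0(0)$ rather than your direct potential-theoretic argument, and in the $\bj_H$ step, where the paper deduces $\bj_H=0$ from the vanishing of $\oint\bj_H\wedge\star_2\bj_H$ on the Lagrangian subspace rather than from the transversality of $\cH^1_{D}(bD)$ and $\star_2\cH^1_{D}(bD)$ (equivalent facts).
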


\begin{proof}
 
If, for $k=0,$ $\bxi^+$ comes from data
$(r,q,\bj_H)\in\cM_{\Gamma,0}\times\cH^1(\Gamma)$ that is in the null-space
of~\eqref{hybrid0.1}, then, also using the fact that $d\bxi^+=0,$ we see that
$\bxi^+_t$ satisfies
\begin{equation}
  d_{\Gamma}\bxi^{+}_t=0=d_{\Gamma}^*\bxi^{+}_t.
\end{equation}
Thus $\bxi^+_t$ is a harmonic 1-form that belongs to the image of
$H^1_{\dR}(\Omega)\hookrightarrow H^1_{\dR}(b\Omega).$ If the integrals in the
first line of~\eqref{auxeqn2.1} are also $0,$ then this implies that the
cohomology class represented by $\bxi^+_t$ is trivial and therefore
$\bxi^+_t=0.$ The integral of $i_{\bn}\bxi^+$ vanishes over each component of
$\Gamma$ and therefore the standard uniqueness result, see~\cite{ColtonKress},
for outgoing harmonic 1-forms in $\Omega$ shows that $\bxi^+=0$.

If $\bEta^+$ is in the null-space of~\eqref{hybrid0.1}, then the normal
component, $\bEta^+\restrictedto_{b\Omega}=0.$ This implies that $\bEta^+$ is a
harmonic Neumann field, which therefore is determined by its class in
$H^2_{\dR}(\Omega,b\Omega).$ As the surfaces $\{S_j\}$ define a basis for
$H_2(\Omega,b\Omega)=[H^2_{\dR}(\Omega,b\Omega)]',$ this class is, in turn, specified by the values of
the integrals in~\eqref{auxeqn2.2}. Hence if these integrals are also zero,
then $\bEta^+= 0$ in $\Omega$ as well.

This shows that if there is data $(r,q,\bj_H)$ for which both
\begin{equation}
  \cQ^{+}(0)\left(\begin{matrix} r\\q\\ \bj_H\end{matrix}\right)=
\left(\begin{matrix}0\\0\end{matrix}\right),
\end{equation}
and the integrals in the auxiliary conditions,~\eqref{auxeqn2.1}
and~\eqref{auxeqn2.2}, vanish, then $(\bxi^+,\bEta^+)\equiv (0,0).$ From the 
jump relations implicit in~\eqref{eqn888.1.1} we conclude that
\begin{equation}
  \bxi^-_{t}=-\bj_H\text{ and }[\star_3\bEta^-]_t=-\star_2\bj_H.
\end{equation}
Since $d\bxi^-=d\star_3\bEta^-=0$ this implies that both $\bj_H$ and
$\star_2\bj_H$ must belong to $\Im(H^1_{\dR}(D)\hookrightarrow H^1_{\dR}(b\Omega)).$
This is a Lagrangian subspace with respect to the wedge-product pairing, and
therefore
\begin{equation}
  \int\limits_{b\Omega}\bj_H\wedge\star_2\bj_H=0.
\end{equation}
Hence  $\bj_H=0$ as well. The fact that $(\bxi^+,\bEta^+)=(0,0)$ now
implies that their normal components vanish. Coupled with the vanishing of
$\bj_H$ we see that this shows:
\begin{equation}
  \left(\frac{-\Id}{2}+K_0(0)\right)q= \left(\frac{-\Id}{2}+K_0(0)\right)r=0.
\end{equation}
It is a classical result, see~\cite{ColtonKress,Nedelec}, that these operators are
invertible and therefore $(r,q)=(0,0).$ This completes the proof of the theorem
and demonstrates that the hybrid equations along with the auxiliary conditions
in~\eqref{auxeqn2.1} and~\eqref{auxeqn2.2} do not suffer from low frequency
breakdown.
\end{proof}

For any harmonic 1-form $\bj_H\in\cH^1(\Gamma),$ we see that, at $\omega=0,$
\begin{equation}\label{eqn119.1}
\begin{split}
  \frac{\bj_H}{2}+\star_2K_4\star_2\bj_H&\in
  \Im(H^1_{\dR}(\Omega)\hookrightarrow H^1_{\dR}(\Gamma))\\
 \frac{\star_2\bj_H}{2}-\star_2K_4\bj_H&\in \Im(H^1_{\dR}(\Omega)\hookrightarrow
 H^1_{\dR}(\Gamma)).
\end{split}
\end{equation}
So these inclusion maps have rank equal to $\frac 12\dim H^1_{\dR}(\Gamma).$
The system~\eqref{hybrid0.1}, \eqref{auxeqn2.1}, and~\eqref{auxeqn2.2} splits into
two almost uncoupled systems, one for $\bxi^+$ as a function of
$d_{\Gamma}^*\bxi^+_t$ and the integrals in~\eqref{auxeqn2.1}, and the other a
system for $\bEta^+$ in terms of $\bEta\restrictedto_{\Gamma}$ and the
integrals in~\eqref{auxeqn2.2}. These systems are coupled only through $\bj_H,$
which appears in both rows of~\eqref{hybrid0.1}.  

The coupling can be effectively removed by choosing a basis
$\{\psi_j:\:j=1,\dots,2g\}$ for $\cH^1(\Gamma),$ so that
$\{\psi_{1},\dots,\psi_{g}\}$ spans the nullspace of the operator
$\star_2\bj_H-2\star_2K_4\bj_H,$ as a map from $\cH^1(\Gamma)$ to
$H^1_{\dR}(\Gamma),$ and $\{\psi_{1+g},\dots,\psi_{2g}\}$ spans the nullspace
of $(\bj_H+2\star_2K_4\star_2\bj_H):\cH^1(\Gamma)\to H^1_{\dR}(\Gamma).$ The
uniqueness theorem just proved, and the invertibility of $r\mapsto G_0r$ on
$\CI(\Gamma),$ show that these subspaces are complementary, and therefore
\begin{equation}
  \left\{\left(\frac{\Id}{2}+\star_2K_4\star_2\right)\psi_j:\:j=1,\dots,g\right\}
\end{equation}
spans the image of $H^1_{\dR}(\Omega)$ in $H^1_{\dR}(\Gamma).$
Using the conditions in~\eqref{auxeqn2.1} we can obtain
coefficients $\{\alpha_j\}$ so that
\begin{equation}
 \kappa= \bxi^{\In}_t+\sum_{j=1}^g\alpha_j\left(\frac{\Id}{2}+\star_2K_4\star_2\right)\psi_j
\end{equation}
is trivial in $H^1_{\dR}(\Gamma).$ The Debye source $r$ is then found by
solving the equation:
\begin{equation}
  -G_0d_{\Gamma}^*d_{\Gamma}G_0r=G_0d_{\Gamma}^*\kappa.
\end{equation}
An analogous discussion applies to the equations for the magnetic field.

For numerical purposes it is possible to split the difference in the integrals
over the $B$-cycles. The
integrals of the data $\bEta^{\In}$ are computed over the spanning surfaces
$\{S_j\}.$ The integrals of $\bxi^{+}_{t}$ over cycles $\{B_j\}$ can be
accurately computed to order $k,$ by taking advantage of the  fact that when
$k=0,$ we have
\begin{equation}
  \int\limits_{B_j}\star_2\cT^{+}_{\bxi}(0)
=\int\limits_{S_j}d\bxi^{+}=0\text{ for }j=1,\dots,g.
\end{equation}
We can therefore rewrite the  equations in~\eqref{auxeqn2.1.1} as
\begin{equation}\label{auxeqn2.2.1}
  \frac{1}{k}\int\limits_{B_j}
\star_2[\cT^{+}_{\bxi}(k)-\cT^{+}_{\bxi}(0)]=i\int\limits_{S_j}\bEta^{\In}.
\end{equation}
The accuracy of the calculation on the left hand side is retained by observing
that, as $|k|$ is assumed to be small and $|x-y|$ is bounded on $\Gamma,$ we can employ:
\begin{equation}
  \frac{1}{k}[g_k(x,y)-g_0(x,y)]= i \sum_{j=0}^{\infty}\frac{(ik|x-y|)^j}{4\pi(j+1)!}
\end{equation}
to avoid having to explicitly divide by $k,$ which thereby avoids catastrophic cancellation.

To use either~\eqref{auxeqn2.2}, or~\eqref{auxeqn2.2.1},  requires finding surfaces $\{S_j\}$ that
span the $B$-cycles, i.e.
\begin{equation}
  bS_j=B_j\text{ for }j=1,\dots,g,
\end{equation}
and performing certain 1- and 2-dimensional integrals. 
The use of  \eqref{auxeqn2.2.1} has the advantage that the integral over the (artificial) spanning surface
${S_j}$ involves a known and typically smooth incoming field, while the complicated integrand
$[\cT^{+}_{\bxi}(k)-\cT^{+}_{\bxi}(0)]$ needs only be computed over $B_j$ which lies on the surface
$bD$ itself. 


\section{The Dielectric Problem on a Torus of Revolution}\label{sec6}

In the case where $\Gamma$ can be described as a surface of revolution, we can
represent the unknown charges $r_1$, $r_0$, $q_1$, and $q_0$ in terms of their
Fourier expansions in the azimuthal variable. 
More precisely, we assume that the surface $\Gamma$ is given by 

\begin{align*}
x(t,\theta) &= \rho(t) \cos \theta \\
y(t,\theta) &= \rho(t) \sin \theta \\
z(t,\theta) &= z(t),
\end{align*}
where the generating curve $\gamma(t)  = (\rho(t),z(t))$ is smooth, with 
$t \in[0,L]$ and $\theta \in [0,2\pi]$
(Fig. \ref{surfrevfig}). It is straightforward to verify that 
\[  \bj_{H_1}(t,\theta) = \left(  \frac{-\sin\theta}{\rho(t)},  \frac{\cos\theta}{\rho(t)}, 0 \right) \quad{\rm and}\quad
  \bj_{H_2}(t,\theta) = \left(\cos\theta, \sin\theta, \frac{z'(t)}{\rho(t)} \right)
\]
form a basis for the harmonic 1-forms on the surface $\Gamma$.

\begin{figure}[hh]
\centering
{\epsfig{file=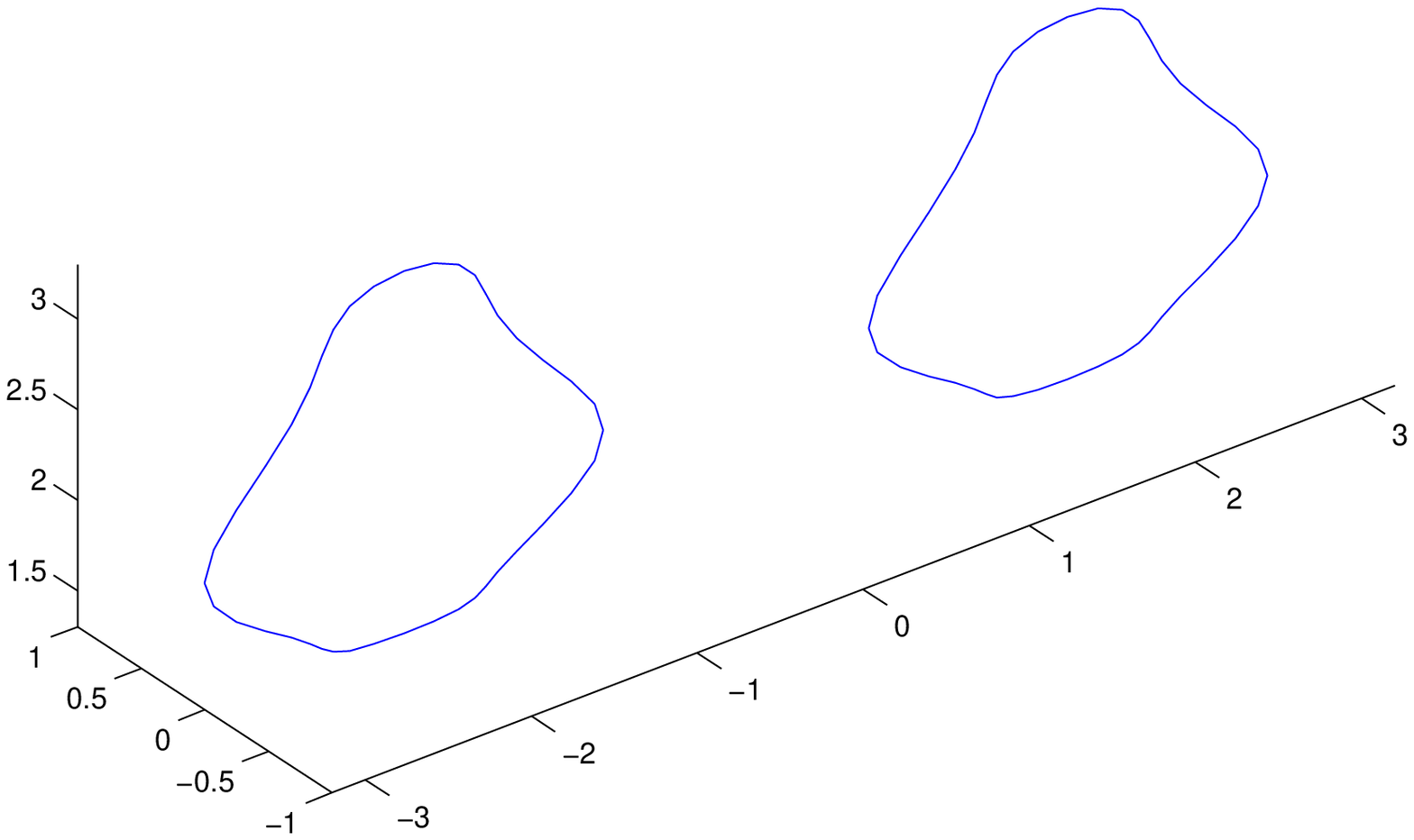,width=6cm}}
{\epsfig{file=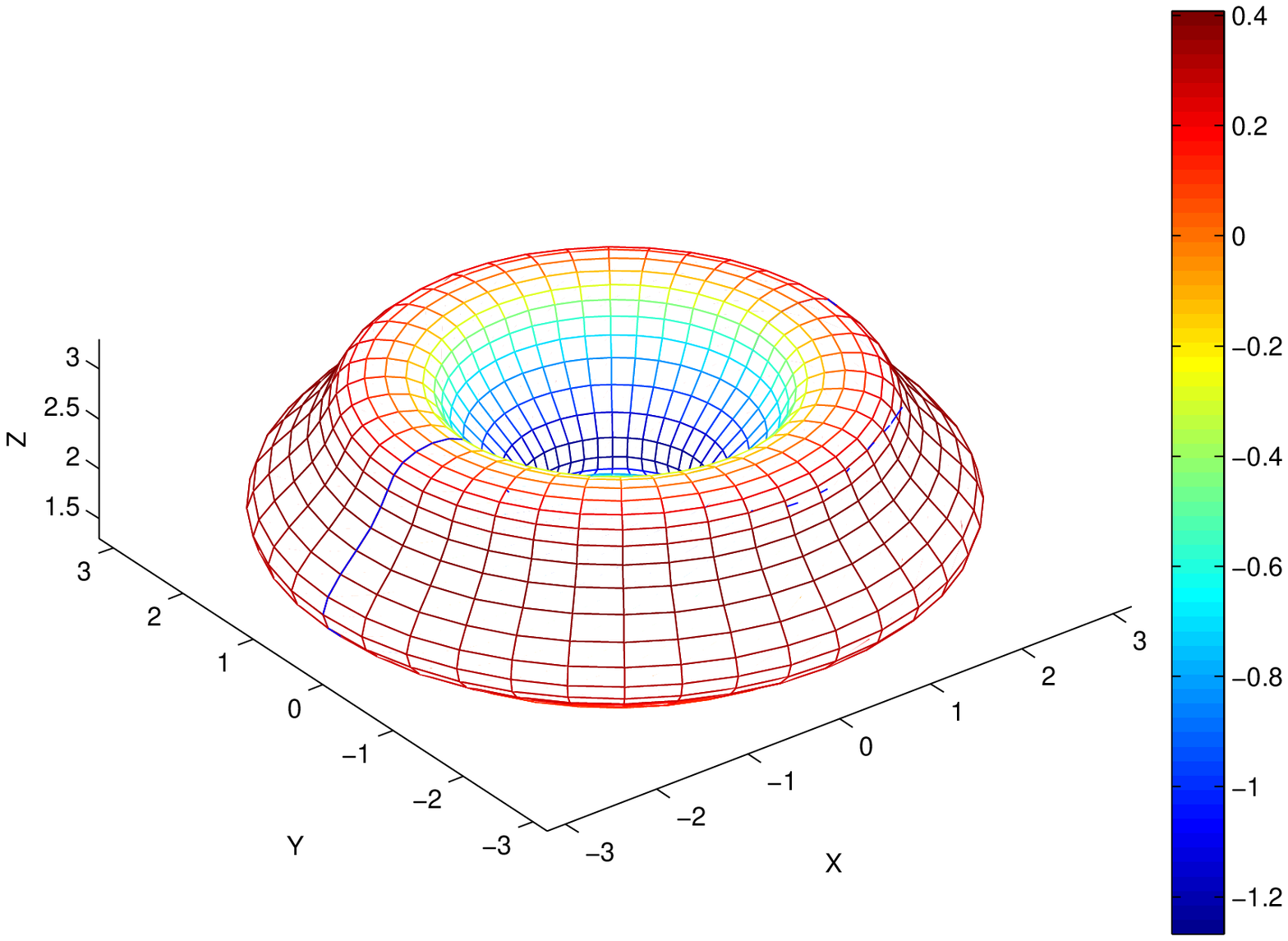,width=6cm}}
\caption{A surface of revolution $\Gamma$ (of genus 1),
defined by a generating curve $\gamma(t) = (\rho(t),z(t))$ which is rotated about the $z$-axis.
We let $\rho(t) = 2 + (1 + 0.2 \, \cos(4t)) \, \cos(t),
z(t) = 2 + (1 + 0.3 \, \sin(4t)) \, \sin(t)$.
On the left is plotted the intersection of the surface with the $xz$-plane. On the right, the
the  $z$-component of the harmonic 1-form ${\bj_H}_2$  is plotted as a
color value on the surface.}
\label{surfrevfig}
\end{figure}

In this setting, we may expand each scalar Debye source $r_l$ and $q_l$, for 
$l = 0,1$,  as a Fourier series in the $\theta$ direction:
\begin{align*}
r_l(t,\theta) &=  \sum_{n=-\infty}^\infty r_{l,n}(t) e^{i n \theta} \\
q_l(t,\theta) &=  \sum_{n=-\infty}^\infty q_{l,n}(t) e^{i n \theta} .
\end{align*}

Using this representation for the scalar sources leads to our
electric and magnetic current-like variables via 
equations \eqref{j1r1r0} and \eqref{m1q1q0}:
\begin{equation}
\begin{split}
\bj_l(t,\theta) &= \alpha_j \bj_{H_1} + \beta_j \bj_{H_2} +
\sum_{n=-\infty}^\infty \left(j^\tau_{l,n}(t) \boldsymbol \tau +
j^\theta_{l,n}(t) \boldsymbol \theta \right) e^{in\theta}, \\
\bm_l(t,\theta) &= \alpha_m \bm_{H_1} + \beta_m \bm_{H_2} +
\sum_{n=-\infty}^{\infty} \left(m^\tau_{l,n}(t) \boldsymbol \tau +
m^\theta_{l,n}(t) \boldsymbol \theta \right) e^{in\theta},
\end{split}
\end{equation}
where $\boldsymbol \tau$ and $\boldsymbol \theta$ are globally defined
orthonormal unit co-vectors defined
along $\Gamma.$
Recall from above that the harmonic 1-forms ${\bj_H}_1$ and ${\bj_H}_2$ 
(${\bm_H}_1$ and ${\bm_H}_2$)
are constant in the azimuthal direction, so that
they couple only  to the purely axisymmetric ($n=0$) mode.

In short, the linear system \eqref{tngeqns}, \eqref{nrmeqns}, \eqref{auxeqn2}
can be solved for each azimuthal mode separately.  For nonzero azimuthal modes,
where the harmonic 1-forms play no role, only equations \eqref{tngeqns},
\eqref{nrmeqns} are needed to determine $r_{l,n}(t), q_{l,n}(t)$ for
$l=0,1$. For each mode the set of equations corresponds to a Fredholm system of
equations of the second kind along the generating curve.

In a companion paper \cite{EGO2}, we have developed a high-order accurate
solver for scattering from closed surfaces of revolution,
with a detailed description of the 
full algorithm. Here, we simply note that the method of \cite{EGO2} assumes an
equi-spaced discretization of $\gamma(t)$, and uses a {\em pseudospectral}
approach for applying (or inverting) the surface Laplacian,
the surface gradient, and the surface divergence. That is to say, differentiation
is carried out in the transform domain, while multiplication by a variable
coefficient function is carried out pointwise along $\gamma(t)$.
The principal value and weakly singular integrals which 
appear are computed using generalized Gaussian quadrature rules
\cite{Alpert}  with eighth or sixteenth order accuracy.

One issue that requires some care is the mean zero condition on 
the scalar sources. We showed, above, that  the
integral equation is invertible as a map from mean zero functions to mean zero
functions. This condition is automatically satisfied for non-zero azimuthal
modes (which necessarily integrate to zero). For the zero mode, one can
either solve the integral equation iteratively, relying on the fact that the operator
projects onto mean zero functions {\em or} add a rank-one modification to the 
system matrix to enforce invertibility of the discretized system
and solve it directly. We have chosen
the latter approach, described in more detail in \cite{EGO2}.

\begin{remark}
In the case of a perfect conductor, there are only
two scalar source functions $(r(t,\theta),q(t,\theta))$,
expanded as above, with the 
electric and magnetic current-like  variables computed from \eqref{jdefpc} and the condition
 $\bm=\star_2\bj$.
\end{remark}

\begin{remark}
In the dielectric case, we still need to specify the choice of a 
clutching map. Our analysis above showed that a sufficient condition for
uniqueness that avoids low frequency breakdown is to set
$\cU=\Id$ on $\cH^1(bD)$. Fig. \ref{clutchfig} displays the condition
number of the linear system for various choices of $\omega$ and
$\cU$.
\end{remark}

\begin{remark}
The second kind Fredholm system derived earlier
for scattering from a dielectric is free from low frequency breakdown,
as well as spurious resonances. An illustration of this is contained
in Fig. \ref{accuracyfig}, which shows the relative $l^2$ error
in the electric and
magnetic fields for the $n=0$ mode in
the exterior and interior  of the object in Fig. \ref{surfrevfig}.
The surface was discretized at 200 points in the $t$ direction 
using a sixteenth order hybrid
Gauss-trapezoidal quadrature rule (see \cite{Alpert}). In order to calculate
the accuracy of the scheme, two extra surfaces of revolution were constructed - one
inside the scatterer and one outside. Smooth generalized Debye sources and harmonic
vector fields were specified on each, and the electric and magnetic fields generated
with the appropriate material parameters
were evaluated on the surface of the scatterer.
The source surface inside the scatterer
gives rise to a field which is valid in the exterior of the
scatterer, and the source surface outside the scatterer
gives rise to a field which is valid in the interior of the scatterer.
The tangential and normal components of the difference of
these fields were taken as the boundary data. Using this boundary data,
the generalized Debye sources and harmonic vector fields on the scatterer were calculated
using the integral equation formulation for the dielectric.
Lastly, the scattered field was evaluated and compared with the known fields
generated by the two source surfaces.

Note that the error becomes even smaller at very small
frequencies - this is because discretization errors that are
introduced in the integral equation in terms that are
$\mathcal O(\omega)$ and $\mathcal O(\omega^2)$ are suppressed
as $\omega \to 0$. Only the accuracy for the $n=0$ mode is plotted.
The relative error, as a function of the frequency $\omega,$ in the
$n=0$ mode is indicative of that in a fully reconstructed scattering problem
(i.e., after reassembling the orthogonal, azimuthally decoupled, field components).
\end{remark}

\begin{figure}[hh]
\centering
{\epsfig{file=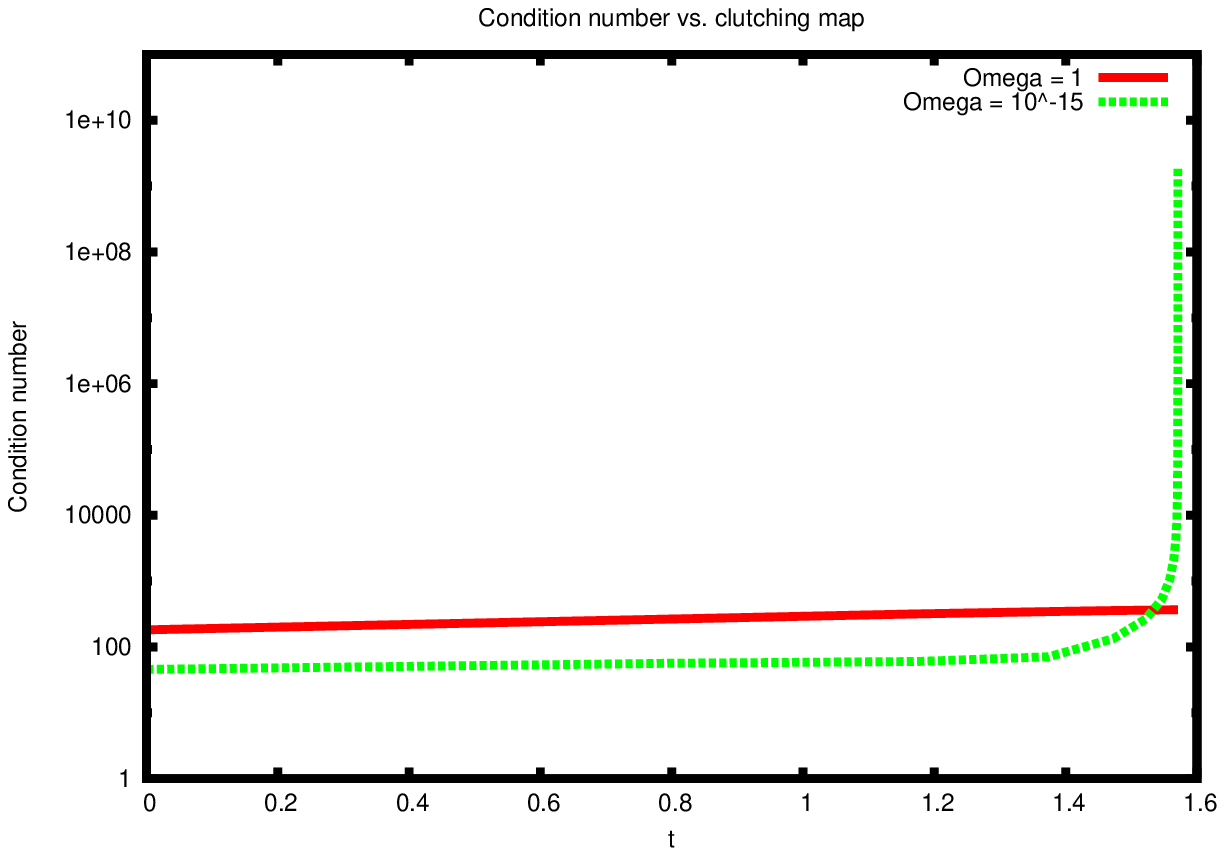,width=6cm}}
{\epsfig{file=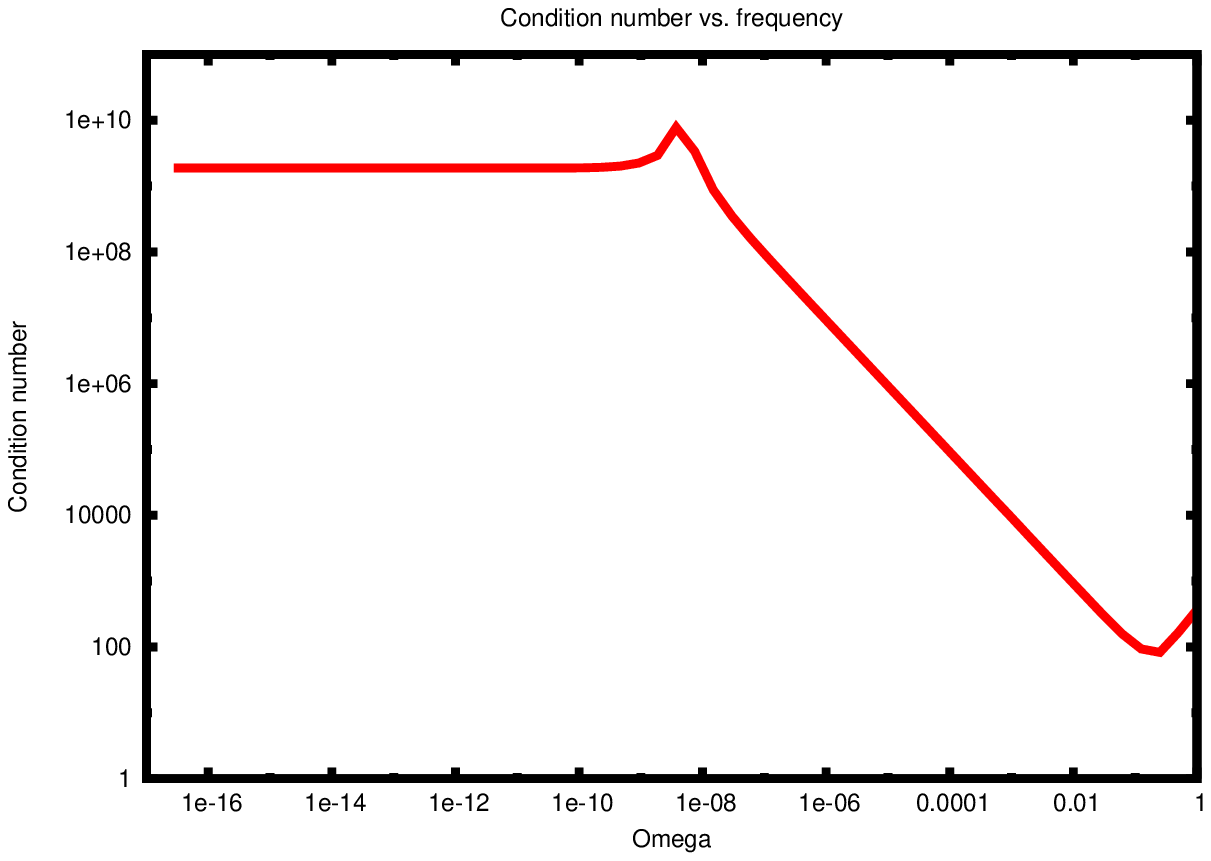,width=6cm}}
\caption{The plots above show the conditioning of 
the linear system for the purely axisymmetric mode depending on
frequency and the choice of clutching map for the surface shown in 
Fig. \ref{surfrevfig}. On $\cH^1(bD)$
we define $\cU=~\cos~t~\Id~+~\sin~t~\star_2$.
On the left is plotted the condition number
for various values of $t$. Note that only when $t$ is near $\pi/2$
and $\omega$ is near zero is the system ill conditioned. On
the right is plotted the dependence of the condition number on $\omega$
when $\cU=\star_2$. In both plots $\epsilon_1=1.30$, $\mu_1=0.83$,
$\epsilon_0=0.90$, and $\mu_1=1.10$.}
\label{clutchfig}
\end{figure}

\begin{figure}[hh]
\centering
{\epsfig{file=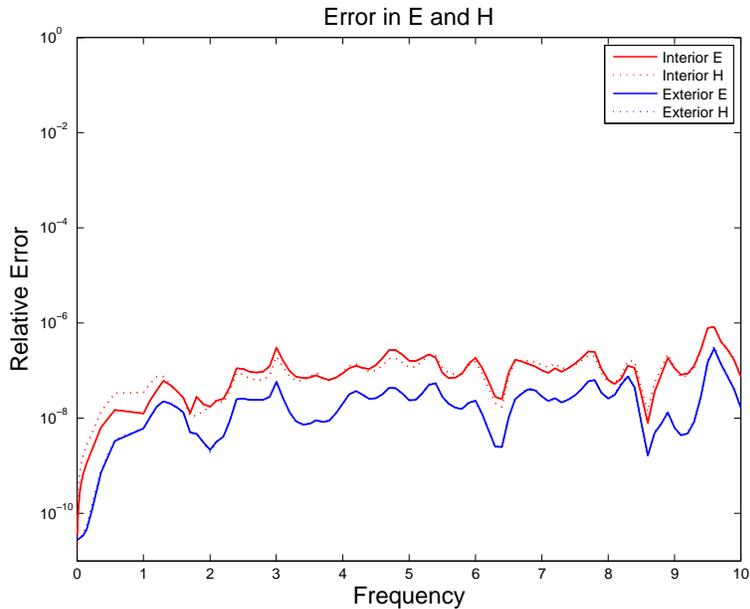,width=10cm}}
\caption{The plot above shows the accuracy obtained in the exterior
and interior electric and magnetic fields at various frequencies
for the purely axisymmetric mode.
The material parameters were chosen to be 
$\epsilon_1=1.30$, $\mu_1=0.83$,
$\epsilon_0=0.90$, and $\mu_1=1.10$.}
\label{accuracyfig}
\end{figure}







\section{Summary}

We have developed a new integral representation for the solution of the time
harmonic Maxwell equations in media with piecewise constant dielectric
permittivity and magnetic permeability in $\bbR^3.$ In the simply connected
case, we rely on four scalar densities (charge-like variables) rather than
surface vector fields as our unknowns, and obtain a system of Fredholm equation
of the second kind.  In the multiply connected case, we supplement these
unknowns with a basis for the surface harmonic 1-forms.  The principal
advantage of our approach is that it avoids the low frequency breakdown
inherent in the classical method, due to M\"{u}ller.  Some subtlety arises in
the selection of the interior and exterior representations. In M\"{u}ller's
equation, the currents used to represent the interior and exterior fields are
chosen to be scalar multiples of each other.  In our case, we use a Hodge
decomposition of the 1-forms and have introduced the {\em clutching map} in
Section 3 to relate interior and exterior variables. We show that for stability
in the zero frequency limit, the clutching map should act differently on the
harmonic 1-forms and on their orthogonal complement.

A disadvantage of our approach is that we must
construct current-like surface 1-forms through a non-local
operator. In the present paper, we do this by inverting the 
Laplace-Beltrami operator, but other (spectrally equivalent) 
procedures can be applied that do not require the solution of a 
linear system of equations \cite{EGGK1,EGO2}. 

{\small \bibliographystyle{siam}{\bibliography{alla-k}}}

\begin{thebibliography}{1}

\bibitem{Alpert}
{\sc B.~K. Alpert}, {\em Hybrid {G}auss-trapezoidal quadrature rules}, SIAM J.
  Sci. Comput., 20 (1999), pp.~1551--1584.

\bibitem{ColtonKress}
{\sc D.~Colton and R.~Kress}, {\em Integral Equation Methods in Scattering
  Theory}, Krieger Publishing Co., Malabar, Florida, reprint~ed., 1992.

\bibitem{EGGK1}
{\sc C.~L. Epstein, Z.~Gimbutas, L.~Greengard, and A.~Kl\"{o}ckner}, {\em A
  fast, stable integral equation method for electromagnetic scattering},
  in preparation,  (2011).

\bibitem{EpGr2}
{\sc C.~L. Epstein and L.~Greengard}, {\em Debye sources and the numerical
  solution of the time harmonic {M}axwell equations}, CPAM, 63 (2010),
  pp.~413--463.

\bibitem{EGO2}
{\sc C.~L. Epstein, M.~O'Neil, and L.~Greengard}, {\em Stable, high-order
  methods for electromagnetic scattering from surfaces of revolution},
  in preparation,  (2011).

\bibitem{Jackson}
{\sc J.~D. Jackson}, {\em Classical Electrodynamics, 3rd ed.}, John Wiley and
  Sons, Hoboken, NJ, 1998.

\bibitem{muller}
{\sc C.~M{\"u}ller}, {\em Foundations of the Mathematical Theory of
  Electromagnetic Waves}, vol.~155 of Die Grundlehren der mathematischen
  Wissenschaften in Einzeldarstellungen, Springer-Verlag, Berlin, Heidelberg,
  1969.

\bibitem{Nedelec}
{\sc J.-C. N\'ed\'elec}, {\em Acoustic and Electromagnetic Equations}, vol.~144
  of Applied Mathematical Sciences, Springer, New York, 2001.

\bibitem{Papas}
{\sc C.~H. Papas}, {\em Theory of Electromagnetic Wave Propagation}, Dover, New
  York, 1988.

\end{thebibliography}
\appendix

\section{The Boundary Operators}\label{bdryops}
We now describe the limits of the tangential components of $\bxi$ and $\bEta$
along $\Gamma.$ Along $\Gamma$ we can introduce an adapted local basis of
orthonormal 1-forms, $\{\omega_1,\omega_2,\nu\},$ and $\bn$ the outward unit
normal vector. In this basis we have
$\bxi^{\pm}=a_{\pm}\omega_1+b_{\pm}\omega_2+c_{\pm}\nu.$ The tangential part of
$\bxi^{\pm}$ at $x\in\Gamma$ is the 1-form $\bxi^{\pm}(x)$ restricted to
directions tangent to $\Gamma,$ i.e., $T_x\Gamma.$ In terms of components along
$\Gamma$ we identify the tangential part with
\begin{equation}
\bxi^{\pm}_t=a_{\pm}\omega_1+b_{\pm}\omega_2.
\end{equation}
If the 2-form, $\bEta^{\pm}=e_{\pm}\omega_1\wedge\nu+
f_{\pm}\omega_2\wedge\nu+g_{\pm}\omega_1\wedge\omega_2,$ then the tangential
components of $\bEta^{\pm}$ along $\Gamma$ are identified with the tangential
components of the one-form $i_{\bn}\bEta^{\pm}$ wedged with $-\nu:$
\begin{equation}
\bEta^{\pm}_t=e_{\pm}\omega_1\wedge\nu+f_{\pm}\omega_2\wedge\nu.
\end{equation}
The normal component of $\bxi$ is $i_{\bn}\bxi$ and that of $\bEta$ is simply
$\bEta\restrictedto_{\Gamma}.$

The various boundary operators are given by:
\begin{align*}
K_0[r](\bx) &= \int_\Gamma \frac{\partial g_k}{\partial n_{\bx}}(\bx - \by) \,
r(\by) \, dA(\by) \\
K_1[r](\bx_0) &= \star_2 d_{\Gamma} \int_\Gamma g_k(\bx - \by) \, 
r(\by) \, dA(\by) \, ;
\end{align*}
$K_0$ is an operator of order $-1$ and $K_1$ is an operator of order $0.$

\begin{align*}
K_{2,n} [\bj](\bx) &= i_{\bn}\left[\int_\Gamma  g_k(\bx - \by) \, 
\bj(\by)\cdot d\bx \, dA(\by)\right] \, , \\
K_{2,t} [\bj](\bx) &= \star_2\left[\int_\Gamma   g_k(\bx - \by) \, 
\bj(\by)\cdot d\bx \, dA(\by)\right]_{t} \, .
\end{align*}
These are operators of order $-1.$
\begin{align*}
K_3[\bj](\bx) &= \int_\Gamma 
 d_{\bx} g_k(\bx - \by) \cdot (\bj(\by) \times \bn(\bx))   
\, dA(\by)  \, , \\
K_4[\bj](\bx) &= \int_\Gamma 
\left[ d_{\bx} g_k(\bx - \by) \, (\bj(\by) \cdot \bn(\bx)) -  
\frac{\partial g_k}{\partial n_{\bx}} (\bx - \by) \, \bj(\by)\cdot d\bx \right] 
\, dA(\by) \, .
\end{align*}
$K_3$ is an operator of order $0$ and $K_4$ is an operator of order $-1.$

\end{document}